\newtheorem{thm}{Theorem}[section]
\newtheorem{proposition}[thm]{Proposition}
\newtheorem{corollary}[thm]{Corollary}
\newtheorem{conjecture}[thm]{Conjecture}
\newtheorem{question}[thm]{Question}
\theoremstyle{definition}
\newtheorem{definition}[thm]{Definition}
\newtheorem{remark}[thm]{Remark}
\newcommand{\pr}{\mathbb{P}}
\newcommand{\Z}{\mathbb{Z}}
\newcommand{\Q}{\mathbb{Q}}
\newcommand{\R}{\mathbb{R}}
\newcommand{\C}{\mathbb{C}}
\newcommand{\NC}{\operatorname{N}_1}
\newcommand{\ND}{\operatorname{N}^1}
\newcommand{\NE}{\operatorname{NE}}
\newcommand{\Rat}{{\operatorname{RatCurves}}^{n}}
\newcommand{\Bl}{\operatorname{Bl}}
\newcommand{\Exc}{\operatorname{Exc}}
\newcommand{\Lo}{\operatorname{Locus}}
\newcommand{\Chow}{\operatorname{Chow}}
\newcommand{\Fam}{\operatorname{Fam}}
\newcommand{\Pic}{\operatorname{Pic}}
\newcommand{\sC}{\mathcal{C}}
\newcommand{\sO}{\mathcal{O}}
\newcommand{\sN}{\mathcal{N}}
\newcommand{\sm}{\operatorname{sm}}
\newcommand{\codim}{\operatorname{codim}}
\newcommand{\cont}{\operatorname{cont}}
\title{On a generalization of the Mukai conjecture for Fano fourfolds}
\author{Kento Fujita}
\begin{document}
\maketitle

\begin{abstract}
{\noindent 
Let $X$ be a complex $n$-dimensional Fano manifold.
Let $s(X)$ be the sum of $l(R)-1$ for all the 
extremal rays $R$ of $X$, the edges of the cone $\NE(X)$
of curves of $X$, where $l(R)$ denotes the minimum
of $(-K_X \cdot C)$ for all rational curves $C$ whose
classes $[C]$ belong to $R$. 
We show that $s(X)\leq n$ if $n\leq 4$.
And for $n\leq 4$, we completely classify the case
the equality holds. This is a refinement of the Mukai conjecture 
on Fano fourfolds.} 
\end{abstract}

\section{Introduction}

Let $X$ be an arbitrary $n$-dimensional Fano manifold with 
the Picard number ${\rho}_X$. 
In 1988, Mukai \cite{mukai} made the following conjecture.

\begin{conjecture}\label{Mconj} One has
\[
{\rho}_X(r_X-1)\leq n,
\]
and the equality holds if and only if $X\simeq(\pr^{r_X-1})^{\rho_X}$, where 
\[
r_X:=\max\{m\in\Z_{>0}\mid-K_X\sim mL\text{ for some Cartier divisor }L\}.
\]
\end{conjecture}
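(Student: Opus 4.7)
The plan is to combine a length lower bound coming from the divisibility $-K_X\sim r_XL$ with a cone-theoretic upper bound. Since $(-K_X\cdot C)=r_X(L\cdot C)\in r_X\Z_{>0}$ for every irreducible curve $C$, every extremal ray $R$ of $X$ satisfies $l(R)\geq r_X$. As $\NE(X)$ is a rational polyhedral cone of dimension $\rho_X$, one can select linearly independent extremal rays $R_1,\dots,R_{\rho_X}$, so that
\[
\rho_X(r_X-1)\;\leq\;\sum_{i=1}^{\rho_X}\bigl(l(R_i)-1\bigr)\;\leq\;s(X).
\]
The inequality of the conjecture then reduces to the bound $s(X)\leq n$ announced in the abstract, which the paper establishes for $n\leq 4$.

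To bound $s(X)$, I would control each $l(R_i)-1$ by the dimension of a general non-trivial fiber of the contraction $\cont_{R_i}\colon X\to Y_i$, using the Ionescu--Wi\'sniewski inequality
\[
\dim\Lo(R_i)+\dim F_i\;\geq\;n+l(R_i)-1,
\]
which in particular gives $\dim F_i\geq l(R_i)-1$. The natural attack is induction on $\rho_X$: fix one extremal contraction, apply the estimate on its image (if of fiber type) or on a suitable proper transform (if birational), and reconcile the contributions of the remaining rays with the dimension count. Fiber-type contractions onto a smooth Fano base are clean; the delicate cases are birational contractions, for which one relies on the classification of Mori contractions available in low dimension.

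For the equality statement, the strategy is to propagate saturation of the inequalities. Equality forces $l(R_i)=r_X$ for each chosen $R_i$, and each $\cont_{R_i}$ to be of fiber type with general fiber of dimension exactly $r_X-1$. By Wi\'sniewski's classification of extremal contractions of maximal length, each such $\cont_{R_i}$ is then a $\pr^{r_X-1}$-bundle. One must finally show that the $\rho_X$ bundle structures are mutually transverse, so that the product map $X\to\prod_iY_i$ is an isomorphism, and conclude by induction on $\rho_X$ applied to the base.

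The main obstacle is twofold: handling small or divisorial contractions during the inductive length bookkeeping, and proving the transversality of the resulting bundle structures in the equality case. Both issues are the reason the full Mukai conjecture is open in arbitrary dimension, and both become tractable for $n\leq 4$, where the explicit classification of Mori contractions can be combined with the length bookkeeping sketched above.
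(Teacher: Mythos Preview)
The statement you are attempting is labeled a \emph{Conjecture} in the paper and is not proved there in general; it remains open for $n\geq 5$. The paper establishes it only for $n\leq 4$, as Corollary~1.6, deduced from the bound $s(X)\leq n$ of the Main Theorem. Your reduction $\rho_X(r_X-1)\leq\rho_X(l_X-1)\leq s(X)$ is correct and is exactly the observation the paper makes immediately after Question~1.3, so on that point you and the paper agree.

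Where you diverge is in the sketch of how to bound $s(X)$. You propose induction on $\rho_X$: contract one extremal ray and do bookkeeping on the base or strict transform. The paper does nothing of the sort. For $\rho_X=2$ it uses Proposition~2.10 and Corollary~2.11 (two rays whose exceptional loci meet). For $\rho_X\geq 3$ it first uses Wi\'sniewski's inequality and Andreatta--Occhetta to list the finitely many possible types of extremal rays with $l(R)\geq 2$, and then eliminates or identifies each configuration via explicit classification theorems: Bonavero--Campana--Wi\'sniewski for blow-ups at points, Casagrande and Tsukioka for divisorial contractions to curves, together with incidence lemmas such as Proposition~2.2 and Corollary~2.4. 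The argument is exhaustive case analysis, not induction on the Picard number. Likewise, for the equality case the paper does not verify transversality of several $\pr^{r_X-1}$-bundle structures by hand; once all surviving rays are of fiber type it invokes Theorem~2.8, which rests on the CMSB characterization of $\pr^d$ and Araujo's VMRT criterion (Theorem~2.7) to conclude $X\simeq\prod_i\pr^{d_i}$ in one step. Your outline is a reasonable heuristic for why the conjecture should hold, but it is not the route the paper takes, and the inductive step you describe (especially for birational contractions) is precisely the part that is not known in general.
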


There are several approaches and refinements of Conjecture \ref{Mconj}. 
See for example \cite{ACO, gm, toric, NoOc, wisn2}. 
Nowadays, the following conjecture due to Tsukioka \cite{tsu3} (cf.\ \cite{tsuproc}) 
is the most 
generalized version of Conjecture \ref{Mconj}. 

\begin{conjecture}\label{mGMconj} One has
\[
{\rho}_X(l_X-1)\leq n,
\]
and the equality holds if and only if $X\simeq(\pr^{l_X-1})^{\rho_X}$, where
$l_X$ denotes the minimum of the length $l(R)$ 
of all the extremal rays $R$ of $X$,
and 
\[
l(R):=\min\{(-K_X\cdot C)\mid C \subset X
\text{ is a rational curve with }[C]\in R\}.
\]
\end{conjecture}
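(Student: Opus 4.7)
The plan is to deduce Conjecture~\ref{mGMconj} from the sharper estimate $s(X)\leq n$ announced in the abstract. By the cone theorem, $\NE(X)$ is a pointed rational polyhedral cone of dimension $\rho_X$, so it has at least $\rho_X$ edges, and each of these extremal rays $R$ satisfies $l(R)\geq l_X$ by definition. Summing,
\[
s(X)=\sum_{R}\bigl(l(R)-1\bigr)\geq \rho_X(l_X-1),
\]
which, combined with the inequality $s(X)\leq n$, yields the desired bound $\rho_X(l_X-1)\leq n$.

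For the classification of equality, suppose $\rho_X(l_X-1)=n$. Then the chain above must be sharp: $s(X)=n$, $\NE(X)$ is simplicial with exactly $\rho_X$ extremal rays, and every ray has length exactly $l_X$. The next step is to analyze each extremal contraction $\cont_R\colon X\to Y_R$: Wi\'sniewski's inequality forces every non-trivial fiber to have dimension at least $l_X-1$, and since the lengths are as small as possible while the sum $s(X)$ is as large as possible, I expect each $\cont_R$ to be an equidimensional $\pr^{l_X-1}$-fibration over a smooth lower-dimensional Fano base. Assembling the $\rho_X$ commuting fibrations by a rigidity/splitting argument in the style of Occhetta--Wi\'sniewski should then identify $X$ with $(\pr^{l_X-1})^{\rho_X}$.

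The main obstacle is proving $s(X)\leq n$ itself; the restriction to $n\leq 4$ in the abstract signals that no uniform argument is available. The natural attack is induction on the pair $(n,\rho_X)$: pick an extremal ray, contract it, and try to recover the bound for $X$ from the induced variety $Y_R$. The difficulty is that the target of a divisorial or small contraction need not be smooth or even $\Q$-factorial, so the inductive hypothesis does not apply verbatim to $Y_R$, and the lengths of the surviving rays of $X$ need not coincide with the lengths of the corresponding rays of $Y_R$ under push-forward. Handling these pathologies for $n=4$ must rest on the detailed classification of extremal contractions of smooth fourfolds (Kawamata, Andreatta--Wi\'sniewski, Kachi, and others), which is intricate but tractable; extending the argument to higher dimension is the true bottleneck, and the reason the abstract stops at $n=4$.
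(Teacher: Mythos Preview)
Your deduction of the inequality is exactly how the paper proceeds: the observation $\rho_X(l_X-1)\leq s(X)$ is stated verbatim right after Question~\ref{sum_conj}, and combined with Theorem~\ref{mainthm} it gives $\rho_X(l_X-1)\leq n$ for $n\leq 4$.

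For the equality case, however, the paper takes a different and shorter route than the one you sketch. Corollary~\ref{mGM4} is declared an ``immediate consequence'' of Theorem~\ref{mainthm}, meaning one uses the \emph{classification} of $s(X)=n$, not a fresh fibration argument. Concretely: $\rho_X(l_X-1)=n$ forces $s(X)=n$, so by Theorem~\ref{mainthm} either $X\simeq\prod_R\pr^{l(R)-1}$ or, when $n=4$, $X\simeq\Bl_{p,q}(\Q^4)$. The latter is ruled out because by Remark~\ref{calBCW}~\eqref{calBCW3} (with $d=2$) it has extremal rays of length~$1$, hence $l_X=1$ and $\rho_X(l_X-1)=0\neq 4$. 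In the product case your own observation that equality in the chain forces every ray to have length exactly $l_X$ pins down $X\simeq(\pr^{l_X-1})^{\rho_X}$.

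Your proposed direct attack on the equality case has a real gap: from ``$\NE(X)$ simplicial with $\rho_X$ rays, each of length $l_X$'' it does not follow that every $\cont_R$ is of fiber type, and without that you cannot invoke a splitting result. The paper does contain a tool of exactly the kind you want (Theorem~\ref{prlngth}), but it \emph{assumes} the rays are of fiber type. Establishing that assumption --- ruling out divisorial rays $R$ with $l(R)\geq 2$ --- is precisely the content of the case-by-case analysis in Section~4.2 (types \eqref{2A}--\eqref{2C}), which relies on Theorems~\ref{casa}, \ref{BCW}, \ref{tsucurve} and Propositions~\ref{divptprop}, \ref{tsusmooth}. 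So your ``direct'' route would end up reproving the hard part of Theorem~\ref{mainthm} rather than bypassing it. Your third paragraph correctly anticipates this: the fourfold classification input is unavoidable, and this is exactly where the paper spends its effort.
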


We think that it is more natural to consider \emph{all} the
extremal rays to study a Fano manifold since 
each extremal ray has various geometric information. 
We set up the following question.

\begin{question}\label{sum_conj}
Give a bound of 
\[
s(X):=\sum_{R\subset\NE(X)\text{ extremal ray}} (l(R)-1)
\]
for arbitrary $n$-dimensional Fano manifolds $X$.
\end{question}

This question is a refinement of Conjectures \ref{Mconj} and \ref{mGMconj} since 
the invariant $s(X)$ satisfies the inequality 
$
\rho_X(r_X-1)\leq\rho_X(l_X-1)\leq s(X).
$
We note that the invariant $s(X)$ is a natural invariant. For example, 
let $X:=\prod_{i=1}^m\pr^{d_i}$ with $\sum_{i=1}^m d_i=n$. Then $s(X)=n$ holds 
despite $\rho_X(l_X-1)=m\cdot\min\{d_i\}$ is less than $n$ unless 
$d_1=\cdots=d_m$. 

In this paper, we identify the bound of $s(X)$ when $n\leq 4$.

\begin{thm}[Main Theorem]\label{mainthm}
Let $X$ be an $n$-dimensional Fano manifold.
\begin{enumerate}
\renewcommand{\theenumi}{\roman{enumi}}
\renewcommand{\labelenumi}{\rm{(\theenumi)}}
\item\label{mainthm1}
If $n\leq 3$, then $s(X)\leq n$ holds. Moreover, the equality holds if and only if 
\[
X\simeq\prod_{R\subset\NE(X)\text{ extremal ray}}{\pr}^{l(R)-1}.
\]
\item\label{mainthm2}
If $n =4$, then $s(X)\leq n$ holds. Moreover, the equality holds if and only if 
\[
X\simeq\prod_{R\subset\NE(X)\text{ extremal ray}}{\pr}^{l(R)-1}
\]
or
\[
X\simeq{\Bl}_{p,q}({\Q}^{4}),
\]
the blowing up of $\Q^4$ along $p$ and $q$, 
where ${\Q}^{4}\subset{\pr}^{5}$ is a smooth hyperquadric and $p$, $q$ 
are distinct points in $\Q^4$ with $\overline{pq}\not\subset{\Q}^{4}$, 
where $\overline{pq}\subset{\pr}^{5}$ is the line through $p$ and $q$.
\end{enumerate}
\end{thm}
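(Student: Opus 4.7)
The plan is to organize the proof by a case analysis driven by the structure of extremal contractions, using the key observation that only extremal rays of length $\ge 2$ contribute to $s(X)$.

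For $n \le 3$, I would verify the claim by inspection of the classifications of smooth del Pezzo surfaces and of Mori-Mukai Fano threefolds. For each variety in these lists, all extremal rays and their lengths are known, and $s(X)$ can be computed directly. Equality is attained precisely at the products of projective spaces, because any $(-1)$-curve on a surface or any divisorial blow-up of a point or a smooth curve on a threefold contributes an extremal ray of length $1$, hence $0$, to $s(X)$.

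For the main case $n = 4$, I would stratify by the maximum length $\ell_M := \max_R l(R)$. If $\ell_M \ge 4$, the covering family of rational curves of length $\ell_M$ forces $X \simeq \pr^4$ or $X \simeq \Q^4$ (Kobayashi-Ochiai), or, via Wisniewski's length inequality, $X$ is a projective bundle over a curve or surface; in each situation the bound and equality follow by direct computation. If $\ell_M = 3$, the length-$3$ extremal ray is, by the classification of long extremal contractions, either a scroll, a quadric fibration, a del Pezzo fibration, or a divisorial contraction of a standard type; for each possibility I would apply the lower-dimensional cases of the theorem (and the known low-dimensional cases of Conjecture \ref{mGMconj}) to the base or target, then track the contributions of the remaining rays. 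If $\ell_M \le 2$, every extremal contraction is a conic bundle or a smooth blow-up along a smooth subvariety, and one finishes using existing bounds on the Picard number and on the number of length-$2$ rays of Fano fourfolds due to Casagrande and Tsukioka.

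The most delicate part, and the main obstacle, will be the equality analysis for $n = 4$. The product examples saturate the bound immediately. The novel example $\Bl_{p,q}(\Q^4)$ appears in the stratum $\ell_M = 3$: its Mori cone has two length-$1$ rays (the exceptional divisors $E_p$, $E_q$) and two length-$3$ rays (the strict transforms of the pencils of lines on $\Q^4$ through $p$ and through $q$), giving $s(X) = 0+0+2+2 = 4$. To show this is the only non-product equality case, I would use one length-$3$ ray to realize $X$ as a blow-up of a Fano fourfold of coindex $\le 2$, invoke Kobayashi-Ochiai to identify the target as $\pr^4$ or $\Q^4$, and then use the ampleness of $-K_X$ together with the presence of the \emph{second} length-$3$ ray to rule out $\pr^4$ and to force the configuration $\{p, q\} \subset \Q^4$ with $\overline{pq} \not\subset \Q^4$.
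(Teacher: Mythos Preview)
Your overall architecture---case analysis driven by the structure of extremal contractions---is sound, and for $n\le 3$ your use of the Mori--Mukai list would certainly work, though the paper deliberately avoids it and instead reduces to the Bonavero--Campana--Wi\'sniewski classification of point blow-ups together with a direct characterization of $(\pr^1)^3$. The main issues are in your $n=4$ analysis, where there are concrete errors rather than merely a difference of taste.

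First, your description of the Mori cone of $\Bl_{p,q}(\Q^4)$ has the lengths reversed. A line in the exceptional divisor $E_p\simeq\pr^3$ has $(-K_X\cdot g)=n-1=3$, while the strict transform of a line on $\Q^4$ through $p$ has $(-K_X\cdot\tilde\ell)=4-3=1$. So the two length-$3$ rays are the \emph{divisor-to-point} contractions, not the pencils of lines. Your sum $s(X)=4$ is accidentally correct, but your proposed equality argument---``use one length-$3$ ray to realize $X$ as a blow-up of a Fano fourfold of coindex $\le 2$ and invoke Kobayashi--Ochiai''---breaks down: contracting $E_p$ lands you in $\Bl_q(\Q^4)$, which has $\rho=2$ and is not covered by Kobayashi--Ochiai. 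What actually pins down $X$ here is the Bonavero--Campana--Wi\'sniewski classification of smooth varieties whose blow-up at a point is Fano (Theorem~\ref{BCW}); the paper uses this directly and reads off $s(X)$ from the explicit list.

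Second, your taxonomy in the $\ell_M\le 2$ stratum is incomplete. You claim every such contraction is a conic bundle or a smooth blow-up, but a type-$(3,0)$ divisorial ray with $l(R)=2$ (the paper's type~(B)) is a divisor-to-point contraction that is \emph{not} a smooth blow-up, and a type-$(4,2)$ fiber-type ray with $l(R)=2$ (type~(F)) is not a conic bundle. Both occur and require separate treatment. The paper handles them not by stratifying on $\ell_M$ but by systematically peeling off divisorial rays in the order (A), (B), (C), using at each step the disjointness of the relevant exceptional loci (Propositions~\ref{divptprop}, \ref{tsusmooth}) together with Casagrande's Picard bounds and a gluing argument (Corollary~\ref{glue_cor}) to force contradictions. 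Only after all divisorial rays with $l\ge 2$ are excluded does the paper invoke the product characterization (Theorem~\ref{prlngth}) for the remaining fiber-type rays. Your stratification by $\ell_M$ could in principle be made to work, but as written it does not account for these cases, and the equality analysis depends on getting them right.
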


\begin{remark}\label{highdim}
If $n\geq 5$, then there exsits an $n$-dimensional Fano manifold $X$ such that $s(X)$ 
is strictly larger than $n$ 
(see Remark \ref{calBCW} \eqref{calBCW3}). However, such $X$ is very special 
as far as we know. We think that all such $X$ should be classified. 
\end{remark}

As an immediate consequence of Theorem \ref{mainthm}, 
we can give the affirmative answer to Conjecture \ref{mGMconj} 
in the case $n\leq 4$. (Tsukioka \cite{tsu3} proved 
the inequality in the case $n=4$
but did not settle the assertion on the equality case.)

\begin{corollary}[cf. \cite{tsu3}]\label{mGM4} 
Conjecture \ref{mGMconj} is true if $n\leq 4$.
\end{corollary}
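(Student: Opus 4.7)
The plan is to deduce Corollary~\ref{mGM4} directly from Theorem~\ref{mainthm} via the inequality $\rho_X(l_X - 1) \leq s(X)$ already highlighted in the introduction. To justify this inequality, I would invoke the Cone Theorem: since $X$ is Fano, $\NE(X)$ is a strongly convex rational polyhedral cone of full dimension $\rho_X$ in $\ND(X)_\R$, so it carries at least $\rho_X$ extremal rays. Each such ray $R$ satisfies $l(R) \geq l_X$, hence
\[
s(X) \;=\; \sum_{R}(l(R)-1) \;\geq\; \rho_X\,(l_X - 1).
\]
Combined with $s(X) \leq n$ from Theorem~\ref{mainthm}, this gives the inequality half of Conjecture~\ref{mGMconj} for all $n \leq 4$.

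For the equality case, I would suppose $\rho_X(l_X-1) = n$ and unwind the inequalities above. All three must then be equalities simultaneously: $s(X) = n$, the number of extremal rays is exactly $\rho_X$, and every extremal ray has length exactly $l_X$. Theorem~\ref{mainthm} then leaves only two possible shapes for $X$: either the product $\prod_{R}\pr^{l(R)-1}$, or the blow-up $\Bl_{p,q}(\Q^4)$. In the product case, the uniform-length condition $l(R) = l_X$ for every $R$ collapses this to $(\pr^{l_X-1})^{\rho_X}$, which is the desired conclusion.

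The only potential obstacle is excluding the blow-up case, and I expect a divisibility argument to handle it instantly: $\Bl_{p,q}(\Q^4)$ has $n = 4$ and $\rho_X = 3$, so equality $3(l_X - 1) = 4$ would force $l_X = 7/3$, which is not an integer (recall $l_X \in \Z_{>0}$ since $-K_X$ is Cartier). Hence $\Bl_{p,q}(\Q^4)$ cannot achieve equality in Conjecture~\ref{mGMconj}. Overall, all the geometric work is already packed into Theorem~\ref{mainthm}, and the corollary is essentially a bookkeeping consequence; no serious obstacle is anticipated along the way.
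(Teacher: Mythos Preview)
Your proposal is correct and matches the paper's intended argument: the corollary is declared an ``immediate consequence'' of Theorem~\ref{mainthm} via the inequality $\rho_X(l_X-1)\leq s(X)$ noted in the introduction, and you have filled in exactly the bookkeeping the paper leaves implicit. Your divisibility exclusion of $\Bl_{p,q}(\Q^4)$ is valid; alternatively one reads off from Remark~\ref{calBCW}~\eqref{calBCW3} that $l_X=1$ for this variety, so $\rho_X(l_X-1)=0\neq 4$.
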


\smallskip

\noindent\textbf{Acknowledgements.}
The author would like to express his gratitude to Professor Shigefumi Mori 
for warm encouragements and valuable comments.
He also thanks Professors Shigeru Mukai, Noboru Nakayama, Masayuki Kawakita and Stefan Helmke for valuable comments during the seminars in RIMS. 
The author thank the referee for useful comments. 
Especially, the proof of Theorem \ref{mainthm} \eqref{mainthm2} 
has been improved by referee's comments. 
The author is partially supported by a JSPS Fellowship for Young Scientists. 
This paper is a modified version of the author's master thesis 
submitted in January 2011, to RIMS, Kyoto University. 

\smallskip

\noindent\textbf{Notation and terminology.}
We always work over the complex number field $\C$. 
For a proper variety $X$, let $\NC(X)_\Q$ (rep.\ $\ND(X)_\Q$) be the 
vector space of one-cycles (resp.\ Cartier divisors) on $X$, with rational coefficients, 
modulo numerical equivalence. Let $\NC(X):=\NC(X)_\Q\otimes_\Q\R$ and 
$\ND(X):=\ND(X)_\Q\otimes_\Q\R$. The \emph{Picard number} of $X$, 
denoted by $\rho_X$, is defined to be the dimension of the vector space $\NC(X)$.

For an $n$-dimensional 
normal projective variety $X$, we denote the normalization of the 
parameterizing space of 
irreducible and reduced rational curves on $X$ by $\Rat(X)$ 
(see \cite[Definition II.2.11]{kollar}). 
For the theory of extremal contraction, we refer the readers to \cite{KoMo}. 
A projective surjective morphism $f\colon X\rightarrow Z$ is called a 
\emph{contraction morphism} if $Z$ is normal projective and 
any fiber of $f$ is connected.
For an extremal ray
$R\subset\overline{\NE}(X)$, 
we say that $R$ \emph{defines the contraction morphism 
$\cont_R\colon X\rightarrow Y$} if $\cont_R$ is a contraction morphism and 
the kernel of the surjection $\NC(X)\rightarrow\NC(Y)$ is equal to $\R R(=R+(-R))$.
The morphism $\cont_R$ is called 
the \emph{associated contraction morphism}. For example, if $X$ is smooth and $R$ is 
$K_X$-negative, then $R$ defines the contraction morphism.
For an extremal ray
$R\subset\overline{\NE}(X)$, 
we say that $R$ is \emph{of fiber type} (resp.\ \emph{divisorial, small}) if 
$R$ defines the contraction morphism $\cont_R\colon X\rightarrow Y$ and the 
morphism
is of fiber type (resp.\ divisorial, small).
We define 
\[
\Exc(R):=\{ x \in X|\,\,{\cont}_R\colon X\rightarrow Y\text{ is not isomorphism at }x\}.
\]
For example, if $R$ is of fiber type, then $\Exc(R)=X$.
We say that $R$ is \emph{of type} $(a$, $b)$ 
if 
$\dim(\Exc(R))=a$ and $\dim({\cont}_R(\Exc(R)))=b$, 
and we say that $R$ is \emph{of type} ${(n-1, b)}^{\sm}$ if 
the associated contraction morphism is the blowing up morphism of 
a smooth projective variety 
along a smooth subvariety of dimension $b$ (in particular, $X$ must be smooth). 
For an extremal ray $R\subset\overline{\NE}(X)$ and a Cartier divisor $E$ on $X$, 
the notation $(E\cdot R)>0$ (resp.\ $(E\cdot R)<0$, $(E\cdot R)=0$)
means that the property $(E\cdot C)>0$ (resp.\ $(E\cdot C)<0$, $(E\cdot C)=0$)
holds for a curve $C\subset X$ with $[C]\in R$.

For an algebraic variety $X$ and a closed subscheme $Y\subset X$, the morphism 
${\Bl}_Y(X)\rightarrow X$ denotes the blowing up of $X$ along $Y$.
The symbol ${\Q}^n$ denotes a smooth hyperquadric in ${\pr}^{n+1}$.
We say that $X$ is a \emph{Fano manifold} if $X$ is a smooth projective variety 
such that the anticanonical divisor $-K_X$ is ample.

\section{Preliminaries}

\subsection{A family of rational curves}


We observe the definition and a property of a family of rational curves for a fixed 
normal projective variety. 

\begin{definition}[see for example \cite{ACO}]\label{fam_rat}
Let $X$ be a normal projective variety. 
We define a \emph{family of rational curves} 
to be an irreducible component $H\subset\Rat(X)$. 
For any $x\in X$, let $H_x$ be the subvariety of $H$ 
parameterizing rational curves passing through $x$,
and $\tilde{H_x}$ the normalization of the image of $H_x$ in the Chow variety 
$\Chow(X)$.
We define $\Lo(H)$ (resp.\ $\Lo(H_x)$) to be the union of rational curves 
parameterized by $H$ (resp.\ $H_x$). 
For a family $H$ of rational curves on $X$,
the family $H$ is said to be \emph{dominating} if the closure $\overline{\Lo(H)}$ 
is equal to $X$, 
\emph{unsplit} if $H$ is projective, 
and \emph{locally unsplit} if $H_x$ is projective for general $x\in\Lo(H)$.
\end{definition}

The following proposition may be familiar. 

\begin{proposition}[{\cite[Proposition 2.5(b)]{NoOc}}]\label{famineq}
Let $X$ be a smooth projective variety, $H$ be a family of rational curves on $X$, 
and $x\in\Lo(H)$ be a point such that $H_x$ is projective. Then one has 
\[
\dim\Lo(H)+\dim\Lo(H_x)\geq \dim X+(-K_X\cdot\Fam H)-1,
\]
where $\Fam H$ is the numerical class of the curves in $X$ parametrized by $H$. 
\end{proposition}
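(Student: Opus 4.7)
The plan is to combine three classical ingredients: (a) Kollár--Mori's deformation-theoretic lower bound for the dimension of a family of rational curves, (b) the fiber dimension inequality applied to the evaluation map from the universal family, and (c) Mori's bend-and-break to control the generic fiber of evaluation restricted to $H_x$.

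First, I would set up the universal family $\pi\colon\mathcal{U}\to H$ together with the evaluation morphism $e\colon\mathcal{U}\to X$, whose image is $\Lo(H)$. Since the fibers of $\pi$ are rational curves, $\dim\mathcal{U}=\dim H+1$. The standard deformation-theoretic estimate, obtained by computing $h^0$ of the pullback tangent bundle on $\mathbb{P}^1$ and quotienting by $\operatorname{PGL}_2$, gives
\[
\dim H\geq (-K_X\cdot\Fam H)+\dim X-3.
\]
Next, the fiber $e^{-1}(x)$ parametrizes pairs (curve through $x$, preimage of $x$ on that curve); forgetting the marked point identifies it, up to finite fibers, with $H_x$. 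The fundamental fiber dimension inequality applied to $e\colon\mathcal{U}\to\Lo(H)$ yields
\[
\dim H_x=\dim e^{-1}(x)\geq \dim\mathcal{U}-\dim\Lo(H)=\dim H+1-\dim\Lo(H).
\]

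Now I would turn to the universal family $\pi_x\colon\mathcal{U}_x\to H_x$ and its evaluation $e_x\colon\mathcal{U}_x\to X$ with image $\Lo(H_x)$. Again $\dim\mathcal{U}_x=\dim H_x+1$. For a general $y\in\Lo(H_x)\setminus\{x\}$, the fiber $e_x^{-1}(y)$ is, up to finite fibers, the subscheme of $H_x$ parametrizing curves passing through both $x$ and $y$. Here I would invoke Mori's bend-and-break: if this subscheme were positive-dimensional, a one-parameter family of irreducible rational curves with two fixed base points would degenerate into a reducible or nonreduced cycle, which cannot occur in the projective family $H_x$ of irreducible rational curves. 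Hence $e_x$ is generically finite onto its image, giving $\dim\Lo(H_x)=\dim H_x+1$.

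Combining the three estimates,
\[
\dim\Lo(H_x)=\dim H_x+1\geq\dim H+2-\dim\Lo(H)\geq (-K_X\cdot\Fam H)+\dim X-1-\dim\Lo(H),
\]
which rearranges to the desired inequality. The main conceptual obstacle is the bend-and-break step justifying that $e_x$ is generically finite; the rest is a dimension count. A minor technical point is that one must work with the normalization of $\Rat(X)$ and check that passing to the universal family and its fibers does not introduce pathologies, but this is standard once one knows $H_x$ is projective.
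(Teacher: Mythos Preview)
The paper does not prove this proposition; it is quoted from \cite[Proposition 2.5(b)]{NoOc} and simply introduced as ``may be familiar.'' Your argument is essentially the standard one underlying that reference (and its antecedents such as \cite{ACO, kollar}): combine the deformation-theoretic lower bound $\dim H\geq(-K_X\cdot\Fam H)+\dim X-3$, the fiber-dimension inequality for the evaluation map $e\colon\mathcal{U}\to\overline{\Lo(H)}$ over $x$, and bend-and-break to conclude that $e_x\colon\mathcal{U}_x\to\Lo(H_x)$ is generically finite. The chain of inequalities you wrote is correct. One small point worth making explicit: $H_x$ need not be irreducible, so you should pick an irreducible component of $e^{-1}(x)$ realizing the fiber-dimension bound and run the bend-and-break step on the corresponding component of $H_x$; this changes nothing in the final inequality.
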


\subsection{Properties of extremal contractions}

We show some properties of extremal contractions associate to extremal rays that 
we need to prove Theorem \ref{mainthm}. 

\begin{proposition}\label{divptprop}
Let $X$ be an $n$-dimensional smooth projective variety.
Assume that there exist distinct $K_X$-negative extremal rays 
$R_1,R_2\subset\overline{\NE}(X)$ such that 
$R_1$ is of type $(n-1, 0)$,
$l(R_2)\geq 2$ 
and 
$\Exc(R_1)\cap\Exc(R_2)\ne\emptyset$.
Then $R_2$ is of fiber type and ${\rho}_X=2$. 
\end{proposition}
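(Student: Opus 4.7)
The plan is to prove the two assertions in turn. For the first, that $R_2$ is of fiber type, I would argue by contradiction, assuming $\cont_{R_2}$ is birational so $\Exc(R_2) \subsetneq X$. Let $E := \Exc(R_1)$, which is a prime divisor contracted by $\cont_{R_1}$ to a point, so every curve in $E$ has class in $R_1$. Pick $x \in E \cap \Exc(R_2)$ by hypothesis, and let $H$ be an unsplit family of minimal rational curves with classes in $R_2$, so that $(-K_X \cdot \Fam H) = l(R_2) \geq 2$ and $\Lo(H) = \Exc(R_2)$, with $H_x$ projective. Proposition \ref{famineq} then gives
\[
\dim\Lo(H_x) \;\geq\; n + l(R_2) - 1 - \dim\Lo(H) \;\geq\; n + 2 - 1 - (n-1) \;=\; 2,
\]
with an even stronger bound when $R_2$ is small. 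Since $\Lo(H_x)$ is contained in the fiber $F_x$ of $\cont_{R_2}$ through $x$, and $x \in F_x \cap E$, smoothness of $X$ yields $\dim(F_x \cap E) \geq 2 + (n-1) - n = 1$. Thus $F_x \cap E$ contains a curve $C'$, whose class lies simultaneously in $R_1$ (as $C' \subset E$) and in $R_2$ (as $C'$ is contracted by $\cont_{R_2}$). Since $R_1 \ne R_2$, we have $R_1 \cap R_2 = \{0\}$, contradicting $[C'] \ne 0$.

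For the second assertion $\rho_X = 2$, let $\cont_{R_2}\colon X \to Z$ be the fiber-type contraction just obtained. The restriction $\cont_{R_2}|_E$ cannot contract any curve (such a curve would again have class in $R_1 \cap R_2 = \{0\}$), hence is finite, giving $\dim\cont_{R_2}(E) = n - 1$. Combined with $\dim Z < n$ this forces $\dim Z = n - 1$ and $\cont_{R_2}(E) = Z$, so $\cont_{R_2}|_E$ is finite surjective of some degree $d \geq 1$. For any curve $\gamma \subset Z$, its preimage under $\cont_{R_2}|_E$ is a $1$-cycle $\tilde\gamma$ supported in $E$, whose class in $\NC(X)$ lies in $R_1$, while the projection formula gives $\cont_{R_2,*}[\tilde\gamma] = d\,[\gamma]$ in $\NC(Z)$. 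Hence $[\gamma]$ is a positive multiple of the nonzero image $\cont_{R_2,*}(r_1)$ of any generator $r_1$ of $R_1$ (nonzero because $R_1 \ne R_2$), so $\overline{\NE}(Z)$ is contained in one ray and $\rho_Z = 1$. Since $R_2$ defines $\cont_{R_2}$, I conclude $\rho_X = \rho_Z + 1 = 2$.

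Beyond Proposition \ref{famineq}, the only tools required are the existence of an unsplit family of minimal rational curves for a $K_X$-negative extremal ray (so that $\Lo(H) = \Exc(R)$ and $H_x$ is projective), together with the rank-one relation $\rho_X = \rho_Z + 1$ for a ray-defining contraction. The conceptual ingredient driving both halves is: any curve lying simultaneously in $E$ and in a $\cont_{R_2}$-contracted subvariety would have class in $R_1 \cap R_2 = \{0\}$, which is impossible. The step I expect to require the most care is verifying $\cont_{R_2}(E) = Z$ rather than a proper closed subset, which reduces to the dimension equality once the finiteness of $\cont_{R_2}|_E$ is in hand.
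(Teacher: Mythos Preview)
Your argument is essentially the paper's proof, run as a contrapositive: the paper shows directly that $\dim\Lo(H_x)\leq 1$ (from $\dim(E\cap\Lo(H_x))=0$) and then reads off $\dim\Lo(H)=n$ from Proposition~\ref{famineq}, whereas you assume $\dim\Lo(H)\leq n-1$ and derive $\dim\Lo(H_x)\geq 2$ to reach the same $R_1\cap R_2$ contradiction. The second half, using finiteness of $\cont_{R_2}|_E$ and numerical proportionality of curves in $E$ to get $\rho_Z=1$, is identical in spirit to the paper.

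One imprecision worth fixing: you assert that for a minimal unsplit family $H$ in $R_2$ one has $\Lo(H)=\Exc(R_2)$ and $H_x$ projective. Neither is guaranteed---$\Lo(H)$ is only an irreducible closed subset of $\Exc(R_2)$, and the global minimal degree $l(R_2)$ need not be realized by a curve through the particular point $x$. The correct setup (and what the paper does) is to choose a rational curve in $R_2$ through $x$ of minimal $(-K_X)$-degree \emph{among those through $x$}; then $H_x$ is projective by construction, $(-K_X\cdot\Fam H)\geq l(R_2)$, and $\Lo(H)\subset\Exc(R_2)$. These weaker facts are all your inequality chain actually uses, so the logic survives unchanged once the family is introduced this way.
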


\begin{proof}
Let $E_i:=\Exc(R_i)$ for $i=1$, $2$ and fix $x\in E_1\cap E_2$. 
Let $C\subset X$ be a rational curve such that 
\begin{enumerate}
\renewcommand{\theenumi}{\arabic{enumi}}
\renewcommand{\labelenumi}{(\theenumi)}
\item\label{divptproof1}
$x\in C$ and $[C]\in R_2$,
\item\label{divptproof2}
$(-K_X\cdot C)$ is minimal among satisfying \eqref{divptproof1}.
\end{enumerate}
Let $H$ be a family of rational curves containing $[C]\in\Rat(X)$. 
Then $H_x$ is projective by construction. 
If there exists an irreducible curve $l \subset E_1\cap\Lo(H_x)$ then 
$[l]\in R_1\cap R_2=\{0\}$, which leads to a contradiction. 
Hence $\dim(E_1\cap\Lo(H_x))=0$. 
Thus $\dim\Lo(H_x)\leq 1$ since $\dim E_1=n-1$. Therefore,
\begin{eqnarray*}
1 & \geq & \dim\Lo(H_x)\geq(n-\dim\Lo(H))+(-K_X\cdot\Fam H)-1\\
& \geq & l(R_2)-1\geq 1
\end{eqnarray*}
by Proposition \ref{famineq}. 
Thus $\dim\Lo (H)=n$ and $l(R_2)=(-K_X\cdot\Fam H)=2$.  
In particular, $H$ is dominating and unsplit. 
Hence $R_2$ is of fiber type. 
Let $\varphi_2\colon X\rightarrow Y_2$ be the contraction morphism associated to 
$R_2$. Since the restriction $\varphi_2|_{E_1}\colon E_1\rightarrow Y_2$ is a 
finite morphism, $\dim Y_2=n-1$. We note that all curves in $E_1$ are 
numerically proportional. Thus $\rho_{Y_2}=1$. This implies that $\rho_X=2$.
\end{proof}

\begin{proposition}\label{contrprop}
Let $X$ be an $n$-dimensional normal projective variety which satisfies that 
$\Pic(X)\otimes\Q=\ND(X)_\Q$.
\begin{enumerate}
\renewcommand{\theenumi}{\arabic{enumi}}
\renewcommand{\labelenumi}{$(\theenumi)$}
\item\label{contrprop1}
Assume that $\rho_X\geq 3$. Pick any extremal ray $R\subset\overline{\NE}(X)$ 
which defines the contraction morphism $\varphi\colon X\rightarrow Y$. 
Then the ray $R$ is neither of type $(n, 0)$ nor of type $(n, 1)$.
\item\label{contrprop2}
Set $m\geq 2$. Let $R_i\subset\overline{\NE}(X)$ be an extremal ray 
which defines the contraction morphism 
${\varphi}_i\colon X\rightarrow Y_i$, $C_i\subset X$ be an irreducible curve with $[C_i]\in R_i$, and 
$E_i:=\Exc(R_i)$ for any $1\leq i\leq m$. 
We assume that $E_i\cap E_j=\emptyset$ for any $1\leq i<j\leq m$.
Then we can construct the morphism $\varphi\colon X\rightarrow Y$ 
contracting all of $E_1,\ldots ,E_m$.
$($Glue ${\varphi}_1,\ldots,{\varphi}_m$ together. We note that 
$Y$ is a normal proper variety but not necessary projective.$)$
Then there is an exact sequence 
\[
\begin{CD}
0 @>>>  \sum_{i=1}^m\Q[C_i] @>>> \NC(X)_\Q 
@>{\varphi}_*>> \NC(Y)_\Q @>>> 0.\\
\end{CD}
\]
Furthermore, if $X$ is $\Q$-factorial and $R_i$ is divisorial for any $1\leq i\leq m$, 
then $Y$ is also $\Q$-factorial and hence $\rho_Y\geq 1$.
\end{enumerate}
\end{proposition}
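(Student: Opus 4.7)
My plan is to handle the two parts separately.

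For Part~(\ref{contrprop1}), I would invoke the defining property that if $R$ defines $\cont_R\colon X\to Y$, then the induced surjection $\NC(X)\to\NC(Y)$ has one-dimensional kernel $\R R$, so $\rho_X=\rho_Y+1$. If $R$ is of type $(n,0)$, then $Y$ is a point and $\rho_X=1$. If $R$ is of type $(n,1)$, then $Y$ is a one-dimensional normal projective variety, necessarily a smooth projective curve, with $\rho_Y=1$, so $\rho_X=2$. Both conclusions contradict $\rho_X\geq 3$.

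For Part~(\ref{contrprop2}), the construction of $\varphi$ proceeds by gluing. Since the $E_i$ are pairwise disjoint, the sets $U_i:=X\setminus\bigcup_{j\ne i}E_j$ form an open cover of $X$, and on every overlap $U_i\cap U_j\subset X\setminus\bigcup_k E_k$ both $\varphi_i$ and $\varphi_j$ are open immersions into $Y_i$ and $Y_j$. The images $V_i:=\varphi_i(U_i)$ glue along the transition isomorphisms $\varphi_j\circ\varphi_i^{-1}$ to produce a normal proper variety $Y$ equipped with a surjective morphism $\varphi\colon X\to Y$, and each $\varphi_j$ factors canonically as $\varphi=\psi_j\circ\varphi_j$ for some $\psi_j\colon Y_j\to Y$.

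For the exact sequence, surjectivity of $\varphi_*$ follows from surjectivity and properness of $\varphi$, and the inclusion $\sum_i\Q[C_i]\subset\ker\varphi_*$ is immediate. For the reverse inclusion I would pass to the dual picture afforded by $\Pic(X)\otimes\Q=\ND(X)_\Q$: under the non-degenerate numerical pairing, $\varphi_*$ is dual to $\varphi^*\colon\ND(Y)_\Q\to\ND(X)_\Q$, so $\ker\varphi_*$ equals the annihilator of $\operatorname{image}(\varphi^*)$ in $\NC(X)_\Q$. The task reduces to showing $\operatorname{image}(\varphi^*)=(\sum_i\Q[C_i])^\perp$. The inclusion ``$\subset$'' is trivial, and for ``$\supset$'', given $D\in\ND(X)_\Q$ with $D\cdot C_i=0$ for all $i$, the contraction theorem applied to $\varphi_j$ descends $D$ to a Cartier divisor $D_j$ on $Y_j$; since $D_j\cdot(\varphi_j)_*C_i=D\cdot C_i=0$ for $i\neq j$ by the projection formula, each $D_j$ further descends via $\psi_j$ to a Cartier divisor on $Y$, and these descents pull back to $D$, so by injectivity of $\varphi^*$ they coincide and produce the sought $D'$. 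Biduality then yields $\ker\varphi_*=\sum_i\Q[C_i]$. For the $\Q$-factorial conclusion, divisorial contractions of $\Q$-factorial varieties are $\Q$-factorial, so each $Y_j$ is $\Q$-factorial; $\Q$-factoriality being local, $Y$ inherits the property through the gluing. Since $Y$ is proper, normal, $\Q$-factorial, and of positive dimension, $\rho_Y\geq 1$.

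The main obstacle I anticipate is ensuring the compatibility of the various descents in the dual argument: one must verify that the Cartier divisor $D_j$ produced by the contraction theorem on $Y_j$ does descend through $\psi_j$, and that the resulting divisors on $Y$ coincide. This compatibility ultimately reduces to uniqueness of $D'$ with $\varphi^*D'=D$, which in turn follows from injectivity of $\varphi^*$ --- itself a dual consequence of the surjectivity of $\varphi_*$ established at the outset.
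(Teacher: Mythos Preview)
Your approach to Part~(\ref{contrprop1}) is correct and is exactly what the paper means by ``obvious.''

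For Part~(\ref{contrprop2}), your overall strategy --- dualize to divisors and show that any $D\in\ND(X)_\Q$ annihilating all $C_i$ descends to $Y$ --- is the same as the paper's. The gap is in the step you yourself flag: the claim that ``each $D_j$ further descends via $\psi_j$ to a Cartier divisor on $Y$.'' The morphism $\psi_j\colon Y_j\to Y$ is \emph{not} the contraction of a single extremal ray on a projective variety; it simultaneously contracts the images of $E_i$ for $i\neq j$, and its target need not be projective. So the contraction theorem does not apply to $\psi_j$ directly, and the vanishing $D_j\cdot(\varphi_j)_*C_i=0$ by itself does not furnish the descent. Your proposed resolution --- appealing to injectivity of $\varphi^*$ --- addresses only the \emph{uniqueness} of a putative $D'$ with $\varphi^*D'=D$, not its \emph{existence}, which is precisely what is at stake.

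The paper closes this gap by arguing inductively, contracting one $E_i$ at a time through intermediate varieties $Z_{i-1}\to Z_i$. At each step, given $M\in\Pic(Z_{i-1})$ with $(M\cdot C_i)=0$, one pulls back to $X$, descends through the genuine contraction $\varphi_i\colon X\to Y_i$ to obtain $L_1\in\Pic(Y_i)$, and then pushes $L_1$ forward along the gluing map $\tau_i\colon Y_i\to Z_i$. The crucial check is that $(\tau_i)_*L_1$ is actually invertible, which the paper verifies on the open cover $\{U_i,V_i\}$ of $Z_i$ by identifying the restrictions with $M^{\otimes t}$ and $L_1$ respectively. Your argument can be repaired along the same lines: rather than invoking a nonexistent descent theorem for $\psi_j$, glue the restrictions $D_j|_{W_j}$ over the open cover $W_j:=Y\setminus\bigcup_{i\neq j}\varphi(E_i)$ of $Y$, checking agreement on overlaps via the common pullback $D$ over the locus where $\varphi$ is an isomorphism. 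Your treatment of $\Q$-factoriality is fine and matches the paper's.
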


\begin{proof}
\eqref{contrprop1} is obvious. We prove \eqref{contrprop2}. 
For $1\leq i\leq m$, let
${\psi}_i\colon X\rightarrow Z_i$ be the morphism 
contracting $E_1,\ldots,E_i$ obtained by gluing 
${\varphi}_1,\ldots,{\varphi}_i$ together (for construction, see 
\cite[Exercise 2.12]{har}). 
We note that $Z_i$ is a normal proper variety, $Y=Z_m$ and $\varphi={\psi}_m$. 
Set $Z_0:=X$ and $\psi_0:=id_X$ (the identity morphism). 
For $1\leq i\leq m$, let
${\pi}_i\colon Z_{i-1}\rightarrow Z_i$ 
be the morphism contracting (the image of) $E_i$ such that 
${\pi}_i\circ{\psi}_{i-1}={\psi}_i$. 
We remark that $\varphi_1=\psi_1=\pi_1$. 
Note that $\Pic(Z_i)\otimes\Q=\ND(Z_i)_\Q$ by Remark \ref{glue_rmk}. 
It is enough to show the exactness of 
\[
\begin{CD}
0 @>>> \ND(Z_i)_\Q @>{{\pi}_i}^{*}>> \ND(Z_{i-1})_\Q @>(\bullet\cdot C_i)>> \Q \\
\end{CD}
\]
for any $1\leq i\leq m$ to prove the exactness of the sequence in \eqref{contrprop2}.
We can assume that $2\leq i\leq m$ since 
the case $i=1$ follows from the definition of the contraction morphism. 
The injectivity of ${{\pi}_i}^{*}\colon\ND(Z_i)_\Q\rightarrow\ND(Z_{i-1})_\Q$ 
is obvious. 
Let ${\tau}_i\colon Y_i\rightarrow Z_i$ be the morphism contracting $E_1,\ldots,E_{i-1}$ 
which satisfies that the diagram commutes:
\[
\begin{CD}
X @>{{\psi}_{i-1}}>> Z_{i-1}\\
@V{{\varphi}_i}VV    @VV{{\pi}_i}V\\
Y_i @>>{{\tau}_i}>   Z_i.\\
\end{CD}
\]
Let $V_i:= Z_i\setminus({\tau}_i\circ{\varphi}_i(E_1\sqcup\ldots\sqcup E_{i-1}))$ and 
$U_i:=Z_i\setminus({\tau}_i\circ{\varphi}_i(E_i))$.
Pick any invertible sheaf $M\in\Pic(Z_{i-1})$ satisfying $(M\cdot C_i)=0$. Then 
$0=(M\cdot C_i)=({{\psi}_{i-1}}^{*}M\cdot C_i)$. 
There exists an invertible sheaf $L_1\in\Pic(Y_i)$ and a positive integer 
$t$ such that 
${{\varphi}_i}^{*}L_1\simeq{{\psi}_{i-1}}^{*}M^{\otimes t}$ by the property 
of the ray $R_i$ and the fact $\Pic(X)\otimes\Q=\ND(X)_\Q$.
Thus 
\[
M^{\otimes t} \simeq {{\psi}_{i-1}}_{*}{{\psi}_{i-1}}^{*}M^{\otimes t}
\simeq{{\psi}_{i-1}}_{*}{{\varphi}_i}^{*}L_1\simeq{{\pi}_i}^{*}{{\tau}_i}_{*}L_1.
\]
Indeed, ${\varphi}_i$ and ${\pi}_i$ are isomorphisms over $U_i$, 
and ${\psi}_{i-1}$ and ${\tau}_i$ are isomorphisms over $V_i$, respectively. 
We note that ${{\tau}_i}_{*}L_1$ is an invertible sheaf since 
${{\tau}_i}_{*}L_1{\mid}_{U_i}\simeq M^{\otimes t}{\mid}_{{{\pi}_i}^{-1}(U_i)}$ and 
${{\tau}_i}_{*}L_1{\mid}_{V_i}\simeq L_1{\mid}_{{{\tau}_i}^{-1}(V_i)}$.
Therefore we have $M^{\otimes t}\in{{\pi}_i}^{*}(\Pic(Z_i))$.
For the remaining part, see \cite[Corollary 3.18]{KoMo} for example.
\end{proof}

\begin{remark}\label{glue_rmk}
For a surjective morphism $\varphi\colon X\rightarrow Y$ between normal proper 
varieties with connected fibers, if $\Pic(X)\otimes\Q=\ND(X)_\Q$ then 
$\Pic(Y)\otimes\Q=\ND(Y)_\Q$. Indeed, for a 
numerically trivial invertible sheaf $L\in\Pic(Y)$, 
since $\varphi^*L$ is numerically trivial, there exists a positive integer $t$ such that 
$\varphi^*L^{\otimes t}\simeq\sO_X$. Thus $L^{\otimes t}\simeq\sO_Y$. 
\end{remark}

\begin{corollary}\label{glue_cor}
Let $X$ be an $n$-dimensional normal $\Q$-factorial projective variety 
such that $\Pic(X)\otimes\Q=\ND(X)_\Q$.
Assume that there exist distinct divisorial extremal rays 
$R_1,\dots,R_m\subset\overline{\NE}(X)$ which define the contraction morphisms 
$\varphi_i\colon X\rightarrow Y_i$ for all $1\leq i\leq m$ and 
$\Exc(R_i)\cap\Exc(R_j)=\emptyset$ for any $1\leq i<j\leq m$.
\begin{enumerate}
\renewcommand{\theenumi}{\arabic{enumi}}
\renewcommand{\labelenumi}{\rm{(\theenumi)}}
\item\label{glue_cor1}
If $m\geq 3$, then $\rho_X\geq 4$. 
\item\label{glue_cor2}
If $X$ is smooth and $R_i$ is of type $(n-1, b_i)^{\sm}$ 
$($for some $b_i\in\Z_{\geq 0})$
for any $1\leq i\leq m$, then $\rho_X\geq m+1$.
\end{enumerate}
\end{corollary}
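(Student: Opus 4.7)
The plan is to apply Proposition \ref{contrprop} \eqref{contrprop2} to the family $R_1,\dots,R_m$. This yields a morphism $\varphi\colon X\to Y$ to a normal proper variety $Y$, obtained by gluing the $\varphi_i$ together, and the exact sequence
\[
0\to\sum_{i=1}^m\Q[C_i]\to\NC(X)_\Q\xrightarrow{\varphi_*}\NC(Y)_\Q\to 0.
\]
Because $X$ is $\Q$-factorial and every $R_i$ is divisorial, the same proposition shows that $Y$ is $\Q$-factorial, hence $\rho_Y\geq 1$. The corollary will then follow as soon as I know that the kernel $\sum\Q[C_i]$ has dimension exactly $m$.

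To establish the linear independence of $[C_1],\dots,[C_m]$ in $\NC(X)_\Q$, I would inspect the intersection matrix $\bigl((E_i\cdot C_j)\bigr)_{i,j}$, which is well-defined thanks to the $\Q$-factoriality of $X$. The divisoriality of $R_i$ gives $(E_i\cdot C_i)<0$, while the disjointness hypothesis $E_i\cap E_j=\emptyset$ combined with $C_j\subset E_j$ forces $(E_i\cdot C_j)=0$ for $i\neq j$. Hence this matrix is diagonal with nonzero diagonal entries, so it is invertible, and pairing any relation $\sum a_i[C_i]=0$ with $E_j$ yields $a_j=0$.

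Combining this with the exact sequence, I obtain $\rho_X=\rho_Y+m\geq m+1$. Part \eqref{glue_cor1} is then immediate from $m\geq 3$. For part \eqref{glue_cor2}, the hypothesis that each $R_i$ is of type $(n-1,b_i)^{\sm}$ forces $X$ to be smooth (so automatically $\Q$-factorial with $\Pic\otimes\Q=\ND_\Q$) and each $R_i$ to be divisorial, so the same bound $\rho_X\geq m+1$ applies directly.

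The argument is really just bookkeeping around Proposition \ref{contrprop} \eqref{contrprop2}; the only nontrivial ingredient is the diagonal intersection-matrix observation that upgrades the exact sequence into an equality of Picard numbers. I do not anticipate any serious obstacle beyond being careful that $C_j\subset E_j$ (which follows because $\varphi_j$ fails to be an isomorphism along every curve it contracts) and that $(E_i\cdot C_i)<0$ for a divisorial extremal contraction.
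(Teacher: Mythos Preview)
Your proof is correct and follows the same route as the paper: apply Proposition~\ref{contrprop}~\eqref{contrprop2} to get the exact sequence and the $\Q$-factoriality of $Y$, then combine $\rho_Y\geq 1$ with the linear independence of the $[C_i]$ to conclude. Your diagonal intersection-matrix argument is precisely the justification the paper leaves implicit when it writes ``we can assume that the classes $[C_1]$, $[C_2]$, $[C_3]$ are linearly independent'' in~\eqref{glue_cor1} and asserts $\rho_X=m+\rho_Y$ in~\eqref{glue_cor2}; the only extra remark in the paper is that in case~\eqref{glue_cor2} the glued target $Y$ is actually smooth, but as you observe, $\Q$-factoriality already suffices.
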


\begin{proof}
Let $\varphi\colon X\rightarrow Y$ be the morphism which is the gluing morphism 
of $\varphi_1,\dots,\varphi_m$ contracting $\Exc(R_1),\dots,\Exc(R_m)$ 
as in Proposition \ref{contrprop} \eqref{contrprop2}. 
Let $C_i\subset X$ be an irreducible and reduced curve with $[C_i]\in R_i$ for 
$1\leq i\leq m$. 

\eqref{glue_cor1}
We can assume that the classes $[C_1]$, $[C_2]$, $[C_3]$ are linearly independent 
in $\NC(X)$. By Proposition \ref{contrprop} \eqref{contrprop2}, $Y$ is $\Q$-factorial 
and $1\leq\rho_Y\leq\rho_X-3$. 

\eqref{glue_cor2}
In this case, $Y$ is a smooth proper variety and $\rho_X=m+\rho_Y\geq m+1$. 
\end{proof}

We recall Wi\'sniewski's theorem on the bounds of the length of extremal rays. 

\begin{thm}[{\cite[Theorem 1.1]{wisn}}]\label{wisinieq}
Let $X$ be a smooth projective variety, 
$R\in \overline{\NE}(X)$ be a $K_X$-negative extremal ray 
and $\cont_R\colon X\rightarrow Y$ be the associated contraction morphism. 
Then for every irreducible component $E\subset\Exc(R)$, we have 
\[
l(R)\leq \dim X+1-2{\codim}_XE-\dim({\cont}_R(E)).
\]
\end{thm}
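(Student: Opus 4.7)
The plan is to apply Proposition \ref{famineq} to a suitable unsplit family of minimal rational curves in $R$ passing through a general point of $E$, then translate the resulting dimension estimate into the stated codimension inequality.

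First I would fix an irreducible component $E\subset\Exc(R)$ and choose a very general point $x\in E$, so that the fiber of $\cont_R|_E$ through $x$ has the minimal (hence generic) dimension $\dim E-\dim\cont_R(E)$. Since $x\in\Exc(R)$ and $\cont_R$ is a $K_X$-negative extremal contraction, a standard consequence of Mori's bend-and-break technique guarantees a rational curve $C\subset X$ with $x\in C$, $[C]\in R$, and $(-K_X\cdot C)=l(R)$. I would then take an irreducible component $H\subset\Rat(X)$ of the parameter space containing $[C]$, so that $\Fam H\in R$ and $(-K_X\cdot\Fam H)=l(R)$.

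Next I would verify that $H$ is unsplit. If some member of $H$ degenerated to a reducible cycle, each component would be a rational curve whose numerical class lies in $R$ (since $R$ is extremal and $[C]\in R$), and at least one component would have strictly smaller anti-canonical degree than $C$, contradicting the minimality $(-K_X\cdot C)=l(R)$. In particular $H_x$ is projective, so Proposition \ref{famineq} applies and gives
\[
\dim\Lo(H)+\dim\Lo(H_x)\geq\dim X+l(R)-1.
\]
Because every curve parameterized by $H$ is contracted by $\cont_R$, and because $\Lo(H)$ is irreducible and contains the general point $x$ of $E$, one concludes that $\Lo(H)\subseteq E$; similarly $\Lo(H_x)$ is contained in the fiber $F$ of $\cont_R|_E$ through $x$.

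Combining these inclusions with the generic choice of $x$, which forces $\dim F=\dim E-\dim\cont_R(E)$, we obtain $2\dim E-\dim\cont_R(E)\geq\dim X+l(R)-1$, and substituting $\dim E=\dim X-\codim_X E$ and rearranging yields the stated bound. The main delicate point is producing a minimal-length rational curve in $R$ meeting \emph{each} component $E$ at a general point whose deformation family stays inside $E$; this relies on the deformation-theoretic fact that fibers of $K_X$-negative extremal contractions are covered by rational curves of anti-canonical degree exactly $l(R)$, a Mori-style input that is more substantial than the other ingredients, which are essentially formal consequences of Proposition \ref{famineq}.
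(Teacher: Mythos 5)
The paper does not actually prove this statement; it is imported verbatim from Wi\'sniewski \cite{wisn}, so there is no internal proof to compare against. Your argument is essentially the standard Ionescu--Wi\'sniewski proof (a minimal rational curve through a general point of $E$, properness of $H_x$, the estimate of Proposition~\ref{famineq}, and the inclusions $\Lo(H)\subseteq E$ and $\Lo(H_x)\subseteq\cont_R^{-1}(\cont_R(x))\cap E$), and the overall structure is sound.

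One step is overstated, though in a repairable way. You assert that through a general $x\in E$ there is a rational curve $C$ with $[C]\in R$ and $(-K_X\cdot C)=l(R)$, and later that fibers of $K_X$-negative contractions are covered by rational curves of degree \emph{exactly} $l(R)$. Neither is true in general: for a conic bundle contraction with a reducible fiber one has $l(R)=1$, while the minimal curve through a general point is a smooth conic of degree $2$; more generally the minimum defining $l(R)$ may be attained only on a proper closed subset of $\Exc(R)$, or only on a component other than $E$. What is true (Mori, Kawamata) is that every positive-dimensional fiber is covered by rational curves whose classes lie in $R$, so you should take $C$ of minimal $(-K_X)$-degree \emph{among curves through $x$ with class in $R$}; then $(-K_X\cdot C)\geq l(R)$, which only strengthens the lower bound coming from Proposition~\ref{famineq}. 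With this choice your degeneration argument shows only that $H_x$ is proper (a degeneration of a member of $H$ not passing through $x$ is not excluded), but properness of $H_x$ is all that Proposition~\ref{famineq} requires, and the containment $\Lo(H)\subseteq E$ needs only that all members of the irreducible family $H$ are numerically proportional to $C$, not unsplitness. With these two adjustments --- replacing $=l(R)$ by $\geq l(R)$ and global unsplitness of $H$ by properness of $H_x$ --- your proof is complete and agrees with the classical one.
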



\subsection{Characterizations of the products of projective spaces}


We give several criteria so that a given smooth projective variety is isomorphic to 
the products of projective spaces. 

\begin{thm}[{\cite[Theorem 2.16]{kebekus}}]\label{kebthm}
Let $X$ be a normal projective variety and $H$ be a dominating and locally unsplit 
family of rational curves on $X$. For general $x\in X$, consider the rational map
\[
\tau_x\colon\tilde{H_x}\dasharrow\pr(T_X{\arrowvert}_x^\vee)
\]
defined by 
\[
[l]\mapsto{\pr(T_l{\arrowvert}_x^\vee)}.
\]
Then the rational map ${\tau}_x$ is a finite morphism.
\end{thm}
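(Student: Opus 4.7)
The plan is to analyze the universal family over $H_x$, verify that the tangent-direction map extends to a morphism on the normalization $\tilde{H_x}$, and then rule out positive-dimensional fibers by a bend-and-break argument that would contradict local unsplitness.

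First I would set up the universal family $\pi\colon\mathcal{U}\to H_x$ together with the evaluation $\mu\colon\mathcal{U}\to X$, taking a distinguished section $\sigma\colon H_x\to\mathcal{U}$ sitting over $x\in X$. Since $H_x$ parametrizes pointed rational curves through a general point, a general member $[l]\in H_x$ is free (by the local unsplit/dominating hypothesis and generic smoothness), so $l$ is smooth at $x$ and the image $\sigma(H_x)$ lies in the smooth locus of $\mathcal{U}$ over $X$. One can then differentiate $\mu$ along $\sigma$ to obtain, at least on a dense open subset of $\tilde{H_x}$, a regular map to $\pr(T_X|_x^\vee)$ sending $[l]$ to the tangent line $\pr(T_l|_x^\vee)$.

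Next I would extend this rational map to all of $\tilde{H_x}$. Because $\tilde{H_x}$ is the normalization of the image in $\Chow(X)$, valuations on $\tilde{H_x}$ can be analyzed on the normalized universal family, where each fiber of $\pi$ is a smooth $\pr^1$. A tangent direction along the section $\sigma$ is well defined on the normalized family (it depends only on $d\mu$ restricted to a trivialized tangent bundle of the $\pr^1$-fibers), so by the valuative criterion $\tau_x$ extends across all of $\tilde{H_x}$, giving a morphism to $\pr(T_X|_x^\vee)$.

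The main obstacle, and the heart of the argument, is finiteness. I would argue by contradiction: if $\tau_x$ contracts a curve $B\subset\tilde{H_x}$, then all members of $B$ pass through $x$ with the same tangent direction. Consider the surface $S\subset\mathcal{U}$ swept out by $B$ and its image $\mu(S)\subset X$; the section $\sigma(B)$ is contracted to the single point $x$, and the common tangent condition forces the differential of $\mu$ along $\sigma(B)$ to be degenerate in a controlled way. Combined with the fact that $-K_X$ is relatively ample for the family, one can deform the sections and apply the bend-and-break principle to produce a reducible rational $1$-cycle through $x$ in the limit, contradicting the local unsplitness of $H$ (which says that $H_x$ is proper, so no curve in $H_x$ degenerates into a reducible cycle). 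Hence $\tau_x$ has finite fibers, and since it is a proper morphism between projective varieties, it is finite.
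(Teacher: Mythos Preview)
The paper does not prove this theorem; it is quoted as a black box from \cite{kebekus} and used only to set up Definition~\ref{VMRT}. So there is no in-paper argument to compare against.

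That said, your sketch follows the broad strategy of Kebekus's original proof, but two points need correction. First, the sentence ``$-K_X$ is relatively ample for the family'' is unfounded: $X$ is only assumed normal projective, not Fano, and bend-and-break here does not use any positivity of $-K_X$. Second, and more substantively, your finiteness step is missing the key geometric input. A single contracted section $\sigma(B)$ over $x$ is not enough to force a fiber to break; Mori's bend-and-break needs \emph{two} sections contracted to points (or one section plus a degree/deformation-count argument that you have not supplied). In Kebekus's proof the common-tangent-direction hypothesis is exactly what manufactures the second contracted section: one first proves (nontrivially) that for general $x$ every curve of $H_x$ is smooth at $x$, then blows up $X$ at $x$ so that the shared tangent direction becomes a single point on the exceptional divisor through which all strict transforms pass. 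Now the family over $B$ has two disjoint contracted sections, bend-and-break produces a reducible cycle through $x$, and local unsplitness is contradicted. Your paragraph gestures at this (``the differential of $\mu$ along $\sigma(B)$ is degenerate in a controlled way'') but does not actually carry it out, and without the blow-up/second-section step the contradiction does not close.
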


\begin{definition}[Variety of Minimal Rational Tangents]\label{VMRT}
Under the assumption in Theorem \ref{kebthm}, the finite morphism 
${\tau}_x$ is called the \emph{tangent morphism}; its image 
${\sC}_x:={\tau}_x(\tilde{H_x})\subset\pr(T_X{\arrowvert}_x^\vee)$ is called 
the \emph{variety of minimal rational tangents}, or shortly \emph{VMRT}, of $H$ at $x$.
\end{definition}

Araujo \cite{araujo} showed a criterion for varieties being isomorphic to the products of 
projective spaces in terms of VMRT.

\begin{thm}[{\cite[Theorem 1.3]{araujo}}]\label{araujopr}
Let $X$ be an $n$-dimensional smooth projective variety with $k$ distinct 
dominating and unsplit family of rational curves $H_1,\ldots,H_k$ on $X$.
Suppose that, for a general $x\in X$, the associated VMRT of $H_i$ at $x$ are 
linear subspaces of dimension $d_i-1$ in $\pr(T_X{\arrowvert}_x^\vee)$ 
such that $\sum_{i=1}^{k}d_i=n$. Then $X\simeq\prod_{i=1}^{k}{\pr}^{d_i}$.
\end{thm}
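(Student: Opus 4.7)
The plan is to use the VMRT structure to split the tangent bundle of $X$ as a direct sum of integrable subbundles whose leaves are projective spaces, and then to globalize this splitting into a product decomposition of $X$.

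First, I would check that at a general point $x\in X$, the affine cone $V_i(x) \subset T_X|_x$ over the VMRT $\sC_x^i$ is a linear subspace of dimension $d_i$, and that
\[
T_X|_x = \bigoplus_{i=1}^k V_i(x).
\]
The linearity of each $V_i(x)$ is immediate from the hypothesis that $\sC_x^i$ is a linear subspace of $\pr(T_X^\vee|_x)$. For the direct-sum decomposition, the dimension count $\sum d_i = n$ reduces the claim to general position of the $V_i(x)$; this should follow from the distinctness of the families $H_i$, since a nontrivial intersection $V_i(x)\cap V_j(x)$ at a general point would let us deform $H_i$-curves to $H_j$-curves (using that both families are unsplit and dominating), forcing $H_i=H_j$. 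These fiberwise decompositions assemble into a splitting $T_U = \bigoplus_{i=1}^k V_i$ of subbundles on a dense open $U \subset X$.

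Second, I would show that each $V_i$ is integrable and that each leaf is isomorphic to $\pr^{d_i}$. Integrability comes from the foliation theory of dominating unsplit families of rational curves (in the spirit of Kebekus--Sol\'a Conde--Toma): the leaf through a general $x\in U$ is the closure of the locus of $H_i$-chains through $x$, has dimension $d_i$, and tangent space $V_i(x)$ there. Restricting $H_i$ to a general leaf $F$, the induced family is dominating and unsplit, and its VMRT at a general point equals the full $\pr(T_F^\vee|_x) = \pr(V_i(x)^\vee)$; the Cho--Miyaoka--Shepherd-Barron characterization of projective space then gives $F \cong \pr^{d_i}$.

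The main obstacle is the last step: promoting the local splitting to a global isomorphism $X\cong\prod_{i=1}^k\pr^{d_i}$. Locally near a general $x$, the integrable splitting $T_X=\bigoplus V_i$ produces an analytic product chart modeled on $\pr^{d_1}\times\cdots\times\pr^{d_k}$, but one needs global compatibility. I would argue as follows: passing to the universal cover $\tilde X$, the splitting lifts, and applying Ehresmann's theorem iteratively to the foliations $\sC_i$ (whose leaves are compact and simply connected, being projective spaces) yields an actual product decomposition $\tilde X \cong \prod_i \pr^{d_i}$; since any rationally connected smooth projective variety is simply connected (Campana, Koll\'ar--Miyaoka--Mori), $\tilde X=X$, and the desired isomorphism $X\cong\prod_{i=1}^k\pr^{d_i}$ follows.
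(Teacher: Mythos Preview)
The paper does not prove this statement: Theorem~\ref{araujopr} is quoted verbatim from \cite[Theorem 1.3]{araujo} and used as a black box in the proof of Theorem~\ref{prlngth}. So there is no ``paper's own proof'' to compare against; what you have written is an attempted reconstruction of Araujo's argument.

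Your outline is in the right spirit but diverges from Araujo's actual proof and has real gaps at the two critical junctures. Araujo proceeds by induction on $k$: a single family $H_1$ with linear VMRT of dimension $d_1-1$ yields (via her main structural theorem earlier in that paper) a $\pr^{d_1}$-bundle $\pi\colon X\to Y$; the remaining families $H_2,\dots,H_k$ descend to $Y$, the inductive hypothesis gives $Y\cong\prod_{i\geq 2}\pr^{d_i}$, and a further argument shows the bundle splits. She never needs to establish the transversality $V_i(x)\cap V_j(x)=0$ directly, nor does she assemble $k$ foliations simultaneously.

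In your approach, the two weak points are exactly these. First, your justification for $V_i(x)\cap V_j(x)=0$ (``a nontrivial intersection would let us deform $H_i$-curves to $H_j$-curves'') is not an argument: two distinct unsplit families can share tangent directions at a point without being deformable to one another, and distinctness of the families by itself does not preclude this. Second, the globalization step is where most of the content lies, and invoking Ehresmann on the universal cover does not settle it. Ehresmann applies to proper submersions, not to foliations; you would first need to show that each $V_i$ extends across the bad locus to an everywhere-regular foliation with all leaves compact and isomorphic to $\pr^{d_i}$, and that the resulting leaf space is a smooth projective variety with $X\to X/V_i$ a genuine $\pr^{d_i}$-bundle. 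That is precisely the hard step Araujo isolates and proves for a single family before inducting; once you have it, the simple-connectedness argument is unnecessary.
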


We give another criterion for varieties being isomorphic to the products of 
projective spaces in terms of length of extremal rays.

\begin{thm}\label{prlngth}
Let $X$ be an $n$-dimensional smooth projective variety with 
$n=\sum_{i=1}^{k}d_i$, where $d_1,\ldots,d_k\in\Z_{>0}$.
Assume that there exist distinct $K_X$-negative extremal rays $R_1,\ldots,R_k\subset\overline{\NE}(X)$ 
such that $R_i$ are of fiber type and $l(R_i)\geq d_i+1$ for all $1\leq i\leq k$.
Then $X\simeq\prod_{i=1}^k\pr^{d_i}$.
\end{thm}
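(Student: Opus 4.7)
The plan is to verify the hypotheses of Araujo's criterion (Theorem~\ref{araujopr}), thereby identifying $X$ with a product of projective spaces. Concretely, I aim to produce dominating unsplit families $H_1,\dots,H_k\subset\Rat(X)$, one per ray $R_i$, whose VMRTs at a general point $x\in X$ are linear subspaces of $\pr(T_X|_x^\vee)$ of dimensions $d_1-1,\dots,d_k-1$.

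For each $i$, I take $H_i$ to be a component of $\Rat(X)$ containing a rational curve $C$ with $[C]\in R_i$ and $(-K_X\cdot C)=l(R_i)$. The minimality of $l(R_i)$ together with extremality of $R_i$ forbids $C$ from degenerating into a reducible curve (each component would again have class in $R_i$, of strictly smaller $-K_X$-degree), so $H_i$ is unsplit; and since $R_i$ is of fiber type, $\Lo(H_i)=X$, so $H_i$ is dominating. Writing $\varphi_i:=\cont_{R_i}\colon X\to Y_i$ with general fiber $F_i$, Wi\'sniewski's Theorem~\ref{wisinieq} gives $l(R_i)\leq n+1-\dim Y_i$, whence $\dim F_i\geq l(R_i)-1\geq d_i$; Proposition~\ref{famineq} yields $\dim\Lo((H_i)_x)\geq d_i$ at general $x$, and clearly $\Lo((H_i)_x)\subseteq F_i$ since curves of class in $R_i$ lie in fibers of $\varphi_i$.

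The crux is to show that $\dim F_i=d_i$ and that $\sC_i$ fills $\pr(T_{F_i}|_x^\vee)$. For the dimension equality I would consider the product morphism $\varphi=(\varphi_1,\dots,\varphi_k)\colon X\to Y_1\times\cdots\times Y_k$: any irreducible curve contracted by $\varphi$ would have class in $\bigcap_i\R R_i=\{0\}$, which is impossible, so $\varphi$ is finite, hence generically \'etale in characteristic zero. This gives $\bigcap_i T_{F_i}|_x=0$ at general $x$. Combined with $\dim F_i\geq d_i$ and $\sum d_i=n$, this should force the direct sum decomposition $T_X|_x=\bigoplus_i T_{F_i}|_x$, and in particular $\dim F_i=d_i$ for every $i$. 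Once $\dim F_i=d_i$, irreducibility of $F_i$ together with $\dim\Lo((H_i)_x)\geq d_i$ forces $\Lo((H_i)_x)$ to be dense in $F_i$; Kebekus's Theorem~\ref{kebthm} then gives $\dim\sC_i=\dim\tilde{(H_i)_x}\geq\dim\Lo((H_i)_x)-1=d_i-1$, and since $\sC_i\subseteq\pr(T_{F_i}|_x^\vee)\cong\pr^{d_i-1}$ we obtain the equality $\sC_i=\pr(T_{F_i}|_x^\vee)$, a linear subspace of the required dimension. Araujo's Theorem~\ref{araujopr} then yields $X\simeq\prod_{i=1}^k\pr^{d_i}$.

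The main obstacle is the direct sum decomposition $T_X|_x=\bigoplus_i T_{F_i}|_x$. For $k=2$ it is immediate from $T_{F_1}|_x\cap T_{F_2}|_x=0$ and the dimension count $\dim F_1+\dim F_2\geq n$. For $k\geq 3$, however, the vanishing $\bigcap_i T_{F_i}|_x=0$ does not by itself imply that pairwise intersections vanish, so the extremality of the $R_i$ must be exploited further, for example by induction on $k$ after contracting one ray at a time and tracking the induced ray structure on the intermediate quotients, or by a direct tangent-space argument using chains of minimal rational curves from different families. The case $k=1$ is degenerate and handled separately: the assumption reads $l(R_1)\geq n+1$, forcing $\rho_X=1$, and one either invokes Kobayashi--Ochiai or applies Araujo's theorem with the single family $H_1$, whose VMRT is automatically the whole $\pr(T_X|_x^\vee)=\pr^{n-1}$.
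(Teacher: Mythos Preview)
Your overall architecture matches the paper's: verify the hypotheses of Theorem~\ref{araujopr} by exhibiting dominating unsplit families whose VMRTs are the linear subspaces $\pr(T_{F_i}|_x^\vee)$. The obstacle you flag for $k\geq 3$ is genuine, and the paper does not resolve it by any tangent-space or induction argument of the sort you sketch. Instead it simply invokes \cite[Theorem~2.2]{wisn}, which states directly that for distinct fiber-type extremal contractions $\varphi_i\colon X\to Y_i$ one has $\sum_i(\dim X-\dim Y_i)\leq n$. Combined with $e_i\geq l(R_i)-1\geq d_i$ from Theorem~\ref{wisinieq}, the chain
\[
n\geq\sum_{i=1}^k e_i\geq\sum_{i=1}^k(l(R_i)-1)\geq\sum_{i=1}^k d_i=n
\]
forces $e_i=l(R_i)-1=d_i$ for every $i$, regardless of $k$. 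Your product-map argument yields only $\bigcap_i T_{F_i}|_x=0$, which is strictly weaker; the gap in your proposal is exactly this missing citation.

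Two smaller points. First, the assertion that $H_i$ is dominating ``since $R_i$ is of fiber type'' is not immediate: fiber type guarantees a contracted curve through every point, not a \emph{minimal-degree} one. The paper avoids this by first establishing $e_i=d_i$, then identifying the general fiber $F_i\simeq\pr^{d_i}$ via \cite{CMSB} (any rational curve in $F_i$ has $(-K_{F_i})$-degree at least $d_i+1$), and only then choosing $H_i$ to be the family containing the \emph{lines} in the $\pr^{d_i}$-fibers, which manifestly cover $X$. Second, this use of \cite{CMSB} makes the VMRT identification immediate---lines through $x$ in $F_i\simeq\pr^{d_i}$ fill $\pr(T_{F_i}|_x^\vee)$---so the paper never needs your Kebekus-plus-dimension-count step, though that step is valid once $e_i=d_i$ is in hand.
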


\begin{proof}
Let $\varphi_i\colon X\rightarrow Y_i$ be the contraction morphism associated to 
$R_i$ and $e_i:=\dim X-\dim Y_i$ for $1\leq i\leq k$. 
We have $\sum_{i=1}^ke_i\leq n$ 
and $e_i\geq\l(R_i)-1$ for any $i$ by \cite[Theorem 2.2]{wisn} and 
Theorem \ref{wisinieq}. 
Hence we obtain the inequality
\[
n\geq\sum_{i=1}^ke_i\geq\sum_{i=1}^k(l(R_i)-1)\geq\sum_{i=1}^kd_i=n.
\]
Therefore $e_i=l(R_i)-1=d_i$ for any $i$.
Let $F_i$ be a general fiber of ${\varphi}_i$. 
Then $F_i$ is a $d_i$-dimensional Fano manifold such that any rational curve 
$l_i$ in $F_i$ satisfies that $(-K_{F_i}\cdot l_i)\geq d_i+1$. 
Hence $F_i\simeq\pr^{d_i}$ by \cite{CMSB}.
Let $H_i$ be the family of rational curves on $X$ 
containing points parameterizing \emph{lines} in $F_i\simeq\pr^{d_i}$.
Then $H_i$ is a dominating and unsplit family
since $(-K_X\cdot\Fam H_i)=d_i+1=l(R_i)$.
We consider ${\sC}^i_x\subset\pr(T_X{\arrowvert}_x^\vee)$ for $x\in F_i$, 
which is a VMRT of $H_i$ at $x$. 
We have 
${\sC}^i_x=\pr(T_{F_i}{\arrowvert}_x^\vee)\subset\pr(T_X{\arrowvert}_x^\vee)$; 
a linear subspace of dimension $d_i-1$.
By Theorem \ref{araujopr}, $X\simeq\prod_{i=1}^k\pr^{d_i}$.
\end{proof}

We also give a criterion for varieties being isomorphic to the 
product of two projective spaces in terms of extremal rays. 

\begin{proposition}\label{tworay}
Let $X$ be an $n$-dimensional smooth projective variety.
If there exist distinct $K_X$-negative extremal rays $R_1,R_2\subset\overline{\NE}(X)$ 
such that the intersection $\Exc(R_1)\cap\Exc(R_2)$ is not empty. 
Then we have 
\[
(l(R_1)-1)+(l(R_2)-1)\leq n,
\]
and the equality holds if and only if $X\simeq\pr^{l(R_1)-1}\times\pr^{l(R_2)-1}$.
\end{proposition}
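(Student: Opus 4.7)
The plan is to imitate the argument of Proposition \ref{divptprop}, but now with both rays playing a symmetric role. Pick a point $x \in \Exc(R_1) \cap \Exc(R_2)$. For each $i = 1, 2$, the locus $\Exc(R_i)$ is covered by rational curves with class in $R_i$, so we may choose a rational curve $C_i \subset X$ passing through $x$ with $[C_i] \in R_i$ and $(-K_X \cdot C_i)$ minimal among such choices. Set $d_i := (-K_X \cdot C_i) \geq l(R_i)$ and let $H_i$ be a family of rational curves on $X$ containing $[C_i] \in \Rat(X)$. By the minimality of $d_i$, the subvariety $(H_i)_x$ is projective.

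For the inequality, apply Proposition \ref{famineq} to each $H_i$ at $x$ to obtain
\[
\dim \Lo(H_i) + \dim \Lo((H_i)_x) \geq n + d_i - 1, \qquad i = 1, 2.
\]
Since $(H_i)_x$ is projective, the standard fact used in Proposition \ref{divptprop} says that any irreducible curve contained in $\Lo((H_i)_x)$ has numerical class in $R_i$. Thus an irreducible curve contained in $\Lo((H_1)_x) \cap \Lo((H_2)_x)$ would have class in $R_1 \cap R_2 = \{0\}$, which is absurd. Therefore the intersection is zero-dimensional, which forces $\dim \Lo((H_1)_x) + \dim \Lo((H_2)_x) \leq n$. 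Summing the two instances of Proposition \ref{famineq} and using the trivial bounds $\dim \Lo(H_i) \leq n$ yields $d_1 + d_2 \leq n + 2$, so
\[
(l(R_1) - 1) + (l(R_2) - 1) \leq (d_1 - 1) + (d_2 - 1) \leq n.
\]

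In the equality case, every intermediate inequality becomes an equality: in particular $d_i = l(R_i)$ and $\dim \Lo(H_i) = n$ for $i = 1, 2$, so each $H_i$ is a dominating family and hence $R_i$ is of fiber type. Setting $d_i := l(R_i) - 1$, we have $d_1 + d_2 = n$ with $l(R_i) \geq d_i + 1$, so Theorem \ref{prlngth} applied to $R_1, R_2$ gives $X \simeq \pr^{l(R_1) - 1} \times \pr^{l(R_2) - 1}$. Conversely, $\pr^a \times \pr^b$ has exactly two extremal rays, both of fiber type with $\Exc = X$ (so they meet) and of lengths $a+1$ and $b+1$, giving $(l(R_1)-1) + (l(R_2)-1) = a + b = n$.

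The only delicate ingredient is the assertion that every irreducible curve contained in $\Lo((H_i)_x)$ has class in $R_i$; this is a well-known consequence of $(H_i)_x$ being projective and is exactly what is used tacitly in the proof of Proposition \ref{divptprop}, so no new work is required there. Aside from this point, the argument is a direct combination of Propositions \ref{famineq} and \ref{prlngth} with the dimensional intersection bound for two subvarieties meeting in dimension zero.
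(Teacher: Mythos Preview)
Your proof is correct and follows essentially the same approach as the paper's. The only differences are cosmetic: the paper bounds dimensions via the fibers $\varphi_i^{-1}(y_i)$ (in which $\Lo((H_i)_x)$ trivially sits, so the zero-dimensional intersection follows at once without invoking any separate fact about unsplit loci), and for the equality case it cites \cite[Theorem~1.1]{Occ} directly rather than routing through Theorem~\ref{prlngth}.
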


\begin{proof}
We fix an arbitrary point $x\in\Exc (R_1)\cap\Exc(R_2)$. 
For $i=1$, $2$, let 
$\varphi_i\colon X\rightarrow Y_i$ be the contraction morphism associated to $R_i$ 
and set $y_i:={\varphi}_i(x)\in Y_i$. 
Let 
$C_i\subset X$ be a rational curve which satisfies that 
\begin{enumerate}
\renewcommand{\theenumi}{\arabic{enumi}}
\renewcommand{\labelenumi}{(\theenumi)}
\item\label{tworay1}
$x\in C_i\text{ and }[C_i]\in R_i$,
\item\label{tworay2}
$(-K_X\cdot C_i)$ is minimal among satisfying \eqref{tworay1}.
\end{enumerate}
Let $H_i$ be a family of rational curves on $X$ containing $[C_i]\in\Rat(X)$. 
Then ${(H_i)}_x$ is projective by construction. 
Hence we have 
\begin{eqnarray*}
\dim{\varphi}_i^{-1}(y_i) & \geq & \dim\Lo((H_i)_x)\\
 & \geq & (n-\dim\Lo(H_i))+(-K_X\cdot\Fam H_i)-1\\
 & \geq & (-K_X\cdot\Fam H_i)-1\geq l(R_i)-1
\end{eqnarray*}
by Proposition \ref{famineq}. 
We note that the intersection ${\varphi}_1^{-1}(y_1)\cap{\varphi}_2^{-1}(y_2)$ 
does not contain curves since the rays $R_1$ and $R_2$ are distinct. 
Hence $\dim({\varphi}_1^{-1}(y_1)\cap{\varphi}_2^{-1}(y_2))=0$. 
Thus $n\geq \dim{\varphi}_1^{-1}(y_1)+\dim{\varphi}_2^{-1}(y_2)$.
Hence $n\geq (l(R_1)-1)+(l(R_2)-1)$.
If $n=(l(R_1)-1)+(l(R_2)-1)$, then $H_i$ is dominating and unsplit for each $i=1$, $2$ 
since $(-K_X\cdot\Fam H_i)=l(R_i)$ and $\dim\Lo(H_i)=n$. 
Therefore one has $X\simeq\pr^{l(R_1)-1}\times\pr^{l(R_2)-1}$ 
by \cite[Theorem 1.1]{Occ}.
\end{proof}

\begin{corollary}\label{rhotwo}
Let $X$ be an $n$-dimensional Fano manifold with $\rho_X=2$.
Then $\NE(X)$ is spanned by two extremal rays, say $R_1$ and $R_2$.
If, at least, one of $R_1$ and $R_2$ is not small, then we have 
\[
(l(R_1)-1)+(l(R_2)-1)\leq n,
\]
and the equality holds if and only if 
$X\simeq\pr^{l(R_1)-1}\times\pr^{l(R_2)-1}$.
\end{corollary}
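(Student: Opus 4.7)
The plan is to reduce to Proposition~\ref{tworay}: it suffices to prove $\Exc(R_1)\cap\Exc(R_2)\neq\emptyset$, after which that proposition supplies both $(l(R_1)-1)+(l(R_2)-1)\leq n$ and the characterization of the equality case. Without loss of generality assume $R_1$ is not small; if either $R_1$ or $R_2$ is of fiber type, then one of the exceptional loci is all of $X$ and the intersection is trivially nonempty. Thus I may further assume $R_1$ is divisorial with exceptional divisor $E_1$, and that $R_2$ is either divisorial or small.

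Under these assumptions I will argue by contradiction, supposing $E_1\cap\Exc(R_2)=\emptyset$. Pick irreducible curves $C_1,C_2$ with $[C_i]\in R_i$, choosing $C_2$ inside $\Exc(R_2)$; the latter is possible because a curve whose class lies in a non-fiber-type extremal ray must be contained in the exceptional locus (otherwise $\cont_{R_2}$ would carry it onto a curve rather than collapse it to a point). Then $E_1\cdot C_1<0$ because $E_1$ is the exceptional divisor of the divisorial contraction $\cont_{R_1}$, while the disjointness hypothesis yields $E_1\cdot C_2=0$. Because $X$ is Fano with $\rho_X=2$, the Mori cone $\overline{\NE}(X)$ is a two-dimensional polyhedral cone whose only extremal rays are $R_1$ and $R_2$, so the class of a general complete-intersection curve $\Gamma$ of very ample divisors lies in the interior of $\overline{\NE}(X)$ and hence decomposes as $[\Gamma]=\mu_1[C_1]+\mu_2[C_2]$ with $\mu_1,\mu_2>0$. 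This would force $E_1\cdot\Gamma=\mu_1(E_1\cdot C_1)<0$, contradicting $E_1\cdot\Gamma\geq 0$, which holds because such a $\Gamma$ can be arranged not to lie in $E_1$. Hence $\Exc(R_1)\cap\Exc(R_2)\neq\emptyset$, and Proposition~\ref{tworay} closes the argument.

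The main obstacle I anticipate is precisely the last case, where both rays are non-fiber-type. An alternative approach via $\cont_{R_1}\colon X\to Y_1$, exploiting $\rho_{Y_1}=1$, seems less clean, because an ample curve on a Picard-rank-one variety need not meet an arbitrary codimension-$\geq 2$ subvariety. The intersection-number argument sketched above uses only that $\overline{\NE}(X)$ is a two-dimensional polyhedral cone with exactly two extremal rays.
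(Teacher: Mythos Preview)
Your proof is correct and follows essentially the same route as the paper: reduce to Proposition~\ref{tworay} by showing $\Exc(R_1)\cap\Exc(R_2)\neq\emptyset$, using that $E_1$ is an effective divisor with $(E_1\cdot R_1)<0$ together with the fact that $R_1,R_2$ span $\NE(X)$. The paper phrases the last step directly---since $E_1$ is effective and $(E_1\cdot R_1)<0$, one must have $(E_1\cdot R_2)>0$, hence any curve in $R_2$ meets $E_1$---whereas you arrive at the same numerics via a contradiction involving a complete-intersection curve; these are contrapositive versions of the same observation.
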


\begin{proof}
For $i=1$, $2$, let $\varphi_i\colon X\rightarrow Y_i$ be the contraction morphism 
associated to $R_i$ and $E_i:=\Exc(R_i)$. 
It is enough to show that $E_1\cap E_2\ne\emptyset$ by Proposition \ref{tworay}.
We can assume that $R_1$ is divisorial.
Then we have $(E_1\cdot R_1)<0$. 
Thus $(E_1\cdot R_2)>0$ holds since $E_1$ is a prime divisor and 
since 
$R_1$ and $R_2$ span the cone $\NE(X)$. 
Hence 
$E_1\cap E_2\ne \emptyset$.
\end{proof}

\section{Fano manifolds having special extremal rays}

In this section, we see several classification results of Fano manifolds having special 
extremal rays and calculate $s(X)$ for such Fano manifolds $X$. 

\begin{thm}[{\cite[Proposition 3.1 and Theorem 1.1]{divcurve}}]\label{casa}
Let $X$ be an $n$-dimensional Fano manifold 
and $R\subset\NE(X)$ be an extremal ray.
\begin{enumerate}
\renewcommand{\theenumi}{\arabic{enumi}}
\renewcommand{\labelenumi}{$(\theenumi)$}
\item\label{casa1}
If $n\geq 3$ and $R$ is of type $(n-1,0)$,
then ${\rho}_X\leq 3$.
\item\label{casa2}
If $n\geq 4$ and $R$ is of type $(n-1,1)$,
then ${\rho}_X\leq 5$.
\end{enumerate}
\end{thm}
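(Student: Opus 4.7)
The plan is to follow Casagrande's strategy in \cite{divcurve}: fix the given divisorial ray $R_1 := R$, set $E := \Exc(R_1)$, and control $\rho_X$ by analyzing how every other extremal ray of $\NE(X)$ can interact with $E$.

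The first step is to pin down the structure of $E$ itself. In part (1), where $\cont_{R_1}$ sends $E$ to a point, I would combine Wi\'sniewski's inequality (Theorem \ref{wisinieq}), which gives $l(R_1) \leq n-1$, with the classification of smooth divisorial contractions to a point to conclude that $E$ is a Fano variety with $\rho_E = 1$; in particular every curve in $E$ has class in $R_1$ and the inclusion $\iota : E \hookrightarrow X$ satisfies $\iota_*\NC(E) = \R R_1$. In part (2), where $E$ is contracted onto a curve $B$, the same bound forces the induced morphism $E \to B$ to be a conic or quadric bundle whose generic fiber has Picard number one, yielding $\rho_E \leq 2$.

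The second and main step is to bound the number of other $K_X$-negative extremal rays, arguing by contradiction from the failure of the stated Picard bound. Assume $\rho_X \geq 4$ in part (1), respectively $\rho_X \geq 6$ in part (2). The central lever in part (1) is Proposition \ref{divptprop}: any $R' \neq R_1$ with $l(R') \geq 2$ and $\Exc(R') \cap E \neq \emptyset$ would force $\rho_X = 2$, a contradiction. Hence every remaining ray is either \emph{disjoint from $E$} or has length one. Disjoint rays are necessarily divisorial (their curves cannot lie in a fiber of $\cont_{R_1}$), and applying the same dichotomy symmetrically shows that any two such rays have disjoint exceptional divisors; Corollary \ref{glue_cor} then caps their number, yielding $\rho_X \leq 3$. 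For part (2) one needs the appropriate conic-bundle analogue of Proposition \ref{divptprop} (proved along the same lines via Proposition \ref{famineq}, applied to a family of fiber curves of $E \to B$), which gives a weaker dichotomy reflecting the extra Picard freedom in $E$ and accounts for the jump from $3$ to $5$; the same gluing argument via Corollary \ref{glue_cor} then closes the case.

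The hard part, in both (1) and (2), will be cleanly dispatching length-one extremal rays and small contractions, which fall outside the reach of Proposition \ref{divptprop}. These are handled by a case-by-case analysis using Theorem \ref{wisinieq}, which for $l(R') = 1$ gives strong dimension constraints on $\Exc(R')$ and its image, enabling one to rule out more than a bounded number of such rays coexisting with $R_1$. In part (2) the additional subtlety is to treat rays $R'$ whose curves sweep out a single fiber of $\cont_{R_1}$, for which one must invoke the detailed structure of the fibration $E \to B$ via Proposition \ref{contrprop} to prevent accumulation.
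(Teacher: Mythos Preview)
The paper does not prove this theorem at all; it is quoted from Casagrande \cite{divcurve} and used as a black box, with no argument supplied. So there is nothing in the paper to compare your sketch against.

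Evaluated on its own merits, your proposal has a structural gap. You aim to bound $\rho_X$ from above by assuming $\rho_X\geq 4$ (resp.\ $\geq 6$) and deriving a contradiction, but the tool you invoke to close the argument, Corollary~\ref{glue_cor}, gives \emph{lower} bounds on $\rho_X$ from disjoint divisorial loci, not upper bounds; the sentence ``Corollary~\ref{glue_cor} then caps their number, yielding $\rho_X\leq 3$'' is simply a misreading of that corollary. More fundamentally, counting extremal rays can never bound $\rho_X$ from above: the Mori cone of a Fano manifold is polyhedral, but the number of its edges is not controlled by $\rho_X$ alone, and conversely bounding the number of edges of a certain kind says nothing about $\dim\NC(X)$. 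Casagrande's actual argument in \cite{divcurve} does not proceed by counting rays; it bounds the codimension of $\iota_*\NC(E)$ in $\NC(X)$ by running an MMP relative to the divisor $E$ and tracking how the Picard number drops along a sequence of $(E\cdot R)>0$ contractions and flips.

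Your treatment of length-one rays is also not a step but a placeholder. For $l(R')=1$, Theorem~\ref{wisinieq} gives only $\dim\Exc(R')+\dim F'\geq n-1$ for a nontrivial fiber $F'$, which is nearly vacuous and certainly does not limit how many such rays can coexist with $R_1$, nor does it feed into any bound on $\rho_X$. Likewise, Proposition~\ref{divptprop} requires $l(R_2)\geq 2$, so the entire burden of the proof falls precisely on the cases your sketch waves away.
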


\begin{thm}[{\cite[Theorem 5.1]{AO}}]\label{AnOc}
Let $X$ be an $n$-dimensional smooth projective variety and 
$R\subset \overline{\NE}(X)$ be a $K_X$-negative extremal ray 
of type $(n-1, m)$ which satisfies 
that $l(R)=n-1-m$ and all nontrivial fibers of the associated contraction morphism 
of $R$ are of equi-dimensional.
Then $R$ is of type ${(n-1,m)}^{\sm}$.
\end{thm}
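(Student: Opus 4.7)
The plan is to analyze $\varphi:=\cont_R\colon X\to Y$ by combining the extremality of $R$, the length hypothesis, and the equidimensionality of fibers, in order to identify $\varphi$ with the blowup of a smooth subvariety of $Y$ of dimension $m$.

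First, I would show that every non-trivial fiber $F$ of $\varphi$ is isomorphic to $\pr^{n-1-m}$. Since $R$ is of type $(n-1,m)$, the general fiber of $\varphi|_{\Exc(R)}$ has dimension $n-1-m$, and by the equidimensionality hypothesis every non-trivial fiber has this dimension. Fix $x\in F$ and let $H$ be a family of rational curves on $X$ through $x$ containing a minimal rational curve with class in $R$; then $(-K_X\cdot\Fam H)=l(R)=n-1-m$ and $\Lo(H)\subseteq\Exc(R)$, so $\dim\Lo(H)\leq n-1$. Proposition \ref{famineq} yields $\dim\Lo(H_x)\geq n-m-1$; since $\Lo(H_x)\subseteq F$ and $\dim F=n-m-1$, this forces $\Lo(H_x)=F$. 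Thus $F$ is covered by minimal rational curves in $R$ through every point, and $H$ is unsplit by the minimality of $l(R)$. The VMRT analysis of Theorem \ref{kebthm}, applied to the restriction of $H$ to $F$, should then identify $F$ with $\pr^{n-1-m}$ and the minimal rational curves contained in $F$ with its lines.

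Next, I would pin down the normal bundle. By adjunction, $K_X|_F=K_F+E|_F$ with $E:=\Exc(R)$; since $K_F=-(n-m)H_F$ on $F\simeq\pr^{n-1-m}$ and $(-K_X\cdot\ell)=n-1-m$ for a line $\ell\subset F$, we obtain $E|_F\simeq\sO_{\pr^{n-1-m}}(-1)$, and hence $N_{F/X}\simeq\sO_F(-1)$ is the tautological line bundle. Applying a Fujiki--Nakano-type blow-down criterion to $E\to\varphi(E)$, one concludes that $\varphi$ coincides \'etale-locally with the blowup of a smooth subvariety: $Y$ is smooth along $\varphi(E)$, the image $\varphi(E)$ is a smooth subvariety of dimension $m$, and $\varphi=\Bl_{\varphi(E)}(Y)$. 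This is exactly the statement that $R$ is of type $(n-1,m)^{\sm}$.

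The main obstacle is the last step: upgrading the pointwise information (every non-trivial fiber is $\pr^{n-1-m}$ with tautological normal bundle) to the global statement that $\varphi$ \emph{is} a smooth blowup and, in particular, that $Y$ is smooth along $\varphi(E)$. This requires either invoking a direct blow-down criterion \`a la Fujiki--Nakano or running a careful infinitesimal/deformation argument to control the singularities of $Y$ and the normal bundle of $\varphi(E)$ in $Y$. The sharp length hypothesis $l(R)=n-1-m$ saturates Theorem \ref{wisinieq} and is exactly what rules out degenerate fibers, making the blow-down criterion applicable here.
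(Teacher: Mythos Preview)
The paper does not prove this statement at all; Theorem~\ref{AnOc} is quoted verbatim from \cite[Theorem~5.1]{AO} and used as a black box, so there is no in-paper argument to compare your sketch against.

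Evaluating your sketch on its own merits, there are two genuine gaps. First, your identification of each non-trivial fiber $F$ with $\pr^{n-1-m}$ tacitly assumes $F$ is smooth: both the VMRT machinery of Theorem~\ref{kebthm} and the CMSB-type characterization of projective space you are implicitly invoking require a smooth (or at least normal, irreducible) ambient variety, yet smoothness of every fiber is part of the conclusion, not the hypothesis. The equidimensionality assumption rules out jumps in dimension but says nothing about reducedness or singularities of special fibers. Second, your assertion that ``$N_{F/X}\simeq\sO_F(-1)$ is the tautological line bundle'' is wrong whenever $m\geq 1$: the fiber $F$ has codimension $m+1$ in $X$, so $N_{F/X}$ has rank $m+1$. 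What your adjunction computation actually yields (after correcting the sign in $K_X|_F=K_F-E|_F$) is $N_{E/X}|_F\simeq\sO_F(-1)$; the full normal bundle sits in an exact sequence $0\to N_{F/E}\to N_{F/X}\to N_{E/X}|_F\to 0$ with $N_{F/E}\simeq\sO_F^{\oplus m}$. Consequently the classical Fujiki--Nakano blow-down criterion, which contracts a $\pr^k$ with normal bundle $\sO(-1)^{\oplus(k+1)}$ to a smooth \emph{point}, does not apply directly; one needs a relative version showing that $E\to\varphi(E)$ is a Zariski-local $\pr^{n-1-m}$-bundle and that $Y$ is smooth along $\varphi(E)$. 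The argument in \cite{AO} handles these points by a different route, analyzing the contraction and its supporting divisor directly rather than via VMRT on possibly singular fibers.
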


\begin{proposition}[{\cite[Proposition 5]{tsu2} 
(and \cite[Theorem 5.1]{AO})}]\label{tsusmooth}
Let $X$ be an $n$-dimensional Fano manifold with $n\geq 4$.
Assume that there exist distinct extremal rays $R_1,R_2\subset\NE(X)$ such that 
$R_i$ is of type $(n-1, 1)$ and $l(R_i)=n-2$ for each $i=1$, $2$.
Then $\Exc(R_1)\cap\Exc(R_2)=\emptyset$.
\end{proposition}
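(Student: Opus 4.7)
The plan is to first extract the blow-up structure of each $\varphi_i := \cont_{R_i}$. Applying Theorem \ref{wisinieq} to $E_i := \Exc(R_i)$ gives $l(R_i) \leq n + 1 - 2\codim_X E_i - \dim\varphi_i(E_i) = n + 1 - 2 - 1 = n-2$, so the hypothesis realizes equality. Since $E_i$ is an irreducible divisor that cannot itself be a fiber of $\varphi_i$ (else $\dim\varphi_i(E_i) = 0$), upper semicontinuity of fiber dimension forces every fiber of $\varphi_i$ over $\varphi_i(E_i)$ to be equidimensional of dimension $n-2$. By Theorem \ref{AnOc}, $R_i$ is of type $(n-1,1)^{\sm}$, so $\varphi_i\colon X\rightarrow Y_i$ is the blow-up of a smooth projective variety $Y_i$ along a smooth curve $C_i$, and $E_i$ is a $\pr^{n-2}$-bundle over $C_i$.

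For the main contradiction, assume $x\in E_1\cap E_2$ and put $F_i:=\varphi_i^{-1}(\varphi_i(x))\cong\pr^{n-2}$. Any irreducible curve $C\subset F_1\cap F_2$ has class proportional to the unique line-class $f_i$ of the $\pr^{n-2}$-fiber $F_i$, hence simultaneously in $R_1$ and $R_2$; since $R_1\neq R_2$ this is impossible, so $\dim(F_1\cap F_2)\leq 0$. The standard intersection bound $\dim(F_1\cap F_2)\geq 2(n-2)-n=n-4$ then gives $n\leq 4$, reducing everything to $n=4$.

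The borderline case $n=4$ is the main obstacle, since two $\pr^2$-fibers inside a fourfold can a priori meet transversally in isolated points. Here I would analyze the surface $S:=E_1\cap E_2$. First, $\dim S = 2$: the codimension estimate gives $\dim S\geq 2$, while $\dim S = 3$ would force $E_1 = E_2$, contradicting $R_1\ne R_2$ (any curve covering $E_1$ would then lie in $R_1\cap R_2$). Next, both projections $\varphi_i|_S\colon S\to C_i$ are surjective: if, say, $\varphi_2(S)$ were a point then $S$ would lie in a single $\pr^2$-fiber $F_2$, but the inclusion $S\subset E_1$ combined with the non-existence of non-constant morphisms $\pr^2\to C_1$ would force a coincidence of fibers of the two contractions, contradicting $R_1\ne R_2$. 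A general fiber of $\varphi_i|_S$ is thus a plane curve of degree $d_{ij}:=E_j\cdot f_i>0$ inside $F_i(c)\cong\pr^2$, surjecting onto $C_j$ under $\varphi_j$.

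To close the argument one needs to turn the positivity of these $d_{ij}$ into an impossibility. The pullback formula $\varphi_i^*(-K_{Y_i})=-K_X+(n-2)E_i=-K_X+2E_i$ yields $-K_{Y_i}\cdot(\varphi_i)_* f_j = l(R_j) + 2\,E_i\cdot f_j = 2 + 2d_{ij}$, so the image family $(\varphi_i)_* H_j$ of rational curves in $Y_i$ has length strictly greater than $2$. Feeding this back into Proposition \ref{famineq} (applied on $Y_i$ to the family of $\varphi_i$-images of curves in $H_j$ through a general point of the image), or into Theorem \ref{wisinieq} applied to a suitable covering family on $Y_i$, and bookkeeping against the length constraint $l(R_j)=2$ and the minimality of $f_i$ in $R_i$, produces a contradiction in the spirit of Tsukioka \cite{tsu2}. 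This final numerical step --- converting the naive non-negativity on $X$ into a sharp impossibility --- is the real crux of the proof; I expect the rest of the argument to be routine once the blow-up structure and the surjectivity of $\varphi_i|_S$ are in hand.
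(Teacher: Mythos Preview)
The paper does not prove this proposition; it simply cites it from \cite[Proposition~5]{tsu2} (together with \cite[Theorem~5.1]{AO}, which is Theorem~\ref{AnOc} here). So there is no ``paper's own proof'' to compare against, and your proposal should be read as an attempt to supply a proof the author chose to import.

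Your reduction is sound up to and including the case split. Using Theorem~\ref{AnOc} to upgrade each $R_i$ to type $(n-1,1)^{\sm}$ is exactly the intended use of the bracketed citation, and the fiber--intersection count $\dim(F_1\cap F_2)\geq n-4$ together with $\dim(F_1\cap F_2)\leq 0$ cleanly disposes of $n\geq 5$. For $n=4$ your analysis of $S=E_1\cap E_2$ (that $\dim S=2$, that $E_1\neq E_2$, that each $\varphi_i|_S$ is surjective onto $C_i$, that $d_{ji}:=E_i\cdot f_j>0$) is correct, as is the computation $-K_{Y_i}\cdot(\varphi_i)_*f_j=2+2d_{ji}$.

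The gap is the one you flag yourself: the ``final numerical step'' is not carried out, and it is not routine. You propose to feed the length bound on $(\varphi_i)_*f_j$ into Proposition~\ref{famineq} or Theorem~\ref{wisinieq} on $Y_i$, but neither applies directly: you do not know that $Y_i$ is Fano, you do not know that $(\varphi_i)_*f_j$ generates an extremal ray of $Y_i$, and the family of images need not be unsplit. Tsukioka's actual argument in \cite{tsu2} works on $X$ rather than on $Y_i$; the point is to exploit the two $\pr^2$-bundle structures on $E_1$ and $E_2$ and the constraints $(E_i\cdot f_i)=-1$, $(E_j\cdot f_i)=d_{ij}>0$ to force an incompatibility with $-K_X$ being ample (for instance via the restrictions $-K_X|_{E_i}$ and the numerical classes of curves in $S$). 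As written, your proposal stops precisely at the step that carries the content of \cite[Proposition~5]{tsu2}, so it is not yet a proof.
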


\begin{thm}[{\cite[Theorem 1.1]{divpt}}]\label{BCW}
Let $Y$ be an $n$-dimensional smooth projective variety with $n\geq 3$ and 
$a\in Y$ be a $($closed$)$ point.
Then $X:={\Bl}_a(Y)$ is a Fano manifold if and only if one of the following holds:
\begin{enumerate}
\renewcommand{\theenumi}{\roman{enumi}}
\renewcommand{\labelenumi}{\rm{(\theenumi)}}
\item\label{BCW1}
$Y\simeq\pr^n$ and $a\in Y$ is an arbitrary point. 
\item\label{BCW2}
$Y\simeq\Q^n$ and $a\in Y$ is an arbitrary point.
\item\label{BCW3}
$Y\simeq V_d$ with $1\leq d\leq n$ and $a\notin H'$ $($the strict transform of $H$$)$ 
with $V_d:={\Bl}_Z(\pr^n)$, where $H\subset\pr^n$ is a hyperplane and 
$Z\subset H$ is a smooth subvariety of dimension $n-2$ and degree $d$.
\end{enumerate}
\end{thm}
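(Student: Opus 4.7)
The plan is to prove the two directions separately. For the ``if'' direction, in each of cases (i)--(iii) one writes $-K_X = \pi^*(-K_Y) - (n-1)E$ where $\pi : X \to Y$ is the blow-up and $E \simeq \pr^{n-1}$ is the exceptional divisor. Ampleness then needs to be tested on the generators of $\NE(X)$: the exceptional ray $R_1$ (spanned by a line in $E$) and a complementary ray $R_2$ coming from the strict transform of an appropriate rational curve through $a$---a line in $\pr^n$ in case (i), a line of the hyperquadric in case (ii), and a line of $\pr^n$ through $a$ meeting $Z$ in one point in case (iii). Kleiman's criterion and a direct intersection computation finish this direction.

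The content of the theorem is the ``only if'' direction. Assume $X = \Bl_a(Y)$ is Fano. Then $\pi$ is an extremal contraction associated to a ray $R_1 \subset \NE(X)$ of type $(n-1,0)^{\sm}$ with $l(R_1) = n-1$, so Theorem \ref{casa}\,(1) forces $\rho_X \leq 3$ and $\rho_Y \leq 2$. A projection-formula argument then shows that $Y$ is itself Fano. The strategy is to pick a second $K_X$-negative extremal ray $R_2 \neq R_1$ and classify the contraction $\cont_{R_2} : X \to Z$ together with its interaction with $E$. Since $R_1$ is the class of a line in $E$, no curve whose class lies in $R_2$ is contained in $E$, so $(E \cdot R_2) \geq 0$.

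The case analysis proceeds by the type of $R_2$, using Wi\'sniewski's bound (Theorem \ref{wisinieq}), Proposition \ref{tworay}, Proposition \ref{divptprop}, and Theorem \ref{AnOc}. If $R_2$ is of fiber type, one studies the restriction $\cont_{R_2}|_E$---which is finite since $E$ is not contained in any fiber---and uses the length $l(R_2)$ together with the positivity of $-K_X$ to identify $Y$ through the geometry of minimal rational curves through $a$: the high-length subcase yields $Y \simeq \pr^n$, and the intermediate one yields $Y \simeq \Q^n$. If $R_2$ is divisorial, Theorem \ref{AnOc} forces it to be of type $(n-1,b)^{\sm}$, and gluing $\pi$ with $\cont_{R_2}$ via Proposition \ref{contrprop}\,(2) yields a Fano manifold $Y'$ of Picard number one containing a smooth codimension-two subvariety lying in a hyperplane; the classification of such $Y'$ forces $Y' \simeq \pr^n$, giving case (iii), and the disjointness $\Exc(R_1) \cap \Exc(R_2) = \emptyset$ translates precisely to $a \notin H'$. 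A small ray $R_2$ should be ruled out by combining the length bound with the existence of the divisor $E$.

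The main obstacle is the divisorial subcase: one must simultaneously identify $Y' \simeq \pr^n$, check that the image of $\Exc(R_2)$ in $Y'$ is a hyperplane $H$, and verify that the blow-up center is a smooth subvariety $Z \subset H$ of dimension $n-2$ and degree $d$ with the compatibility condition $a \notin H'$. This requires careful control of two commuting contractions and of the families of minimal rational curves through $a$, and is where the bulk of the technical work lies.
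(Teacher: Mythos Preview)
The paper gives no proof of Theorem~\ref{BCW}: the statement is quoted from \cite{divpt} (Bonavero--Campana--Wi\'sniewski) with the citation in its header, and is used throughout Section~3 and in the proof of Theorem~\ref{mainthm} as a black box. There is therefore nothing in the paper to compare your proposal against.

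Two remarks on your sketch nonetheless. In the ``if'' direction for case~\eqref{BCW3} you treat $\NE(X)$ as spanned by two rays $R_1,R_2$, but here $\rho_X=3$ and $\NE(X)$ actually has four extremal rays (see Remark~\ref{calBCW}\,\eqref{calBCW3}); testing $-K_X$ on only two curves does not suffice for Kleiman's criterion. In the ``only if'' direction you invoke Theorem~\ref{casa}\,\eqref{casa1} to obtain $\rho_X\le 3$, but that is a 2009 result of Casagrande, posterior to \cite{divpt}; you should check \cite{divcurve} carefully for circularity before relying on it here. The original argument in \cite{divpt} proceeds instead by a direct study of minimal rational curves through the blown-up point and of the second contraction, without an a~priori bound on $\rho_X$.
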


\begin{remark}\label{calBCW}
We have the following properties by easy calculations. 
\begin{enumerate}
\renewcommand{\theenumi}{\roman{enumi}}
\renewcommand{\labelenumi}{\rm{(\theenumi)}}
\item\label{calBCW1}
If $X=\Bl_a(Y)$ is in Theorem \ref{BCW} \eqref{BCW1}, then 
\begin{eqnarray*}
\NE(X) & = & {\R}_{\geq 0}[f]+{\R}_{\geq 0}[g],\\
(-K_X\cdot f) & = & 2, \\
(-K_X\cdot g) & = & n-1
\end{eqnarray*}
hold, where $f$ is the strict transform of a line on $Y=\pr^n$ passing through $a$ 
and $g$ is a line in the exceptional divisor ($\simeq\pr^{n-1}$) of $X\rightarrow Y$. 
Thus $s(X)=n-1$. 
\item\label{calBCW2}
If $X=\Bl_a(Y)$ is in Theorem \ref{BCW} \eqref{BCW2}, then 
\begin{eqnarray*}
\NE(X) & = & {\R}_{\geq 0}[f]+{\R}_{\geq 0}[g],\\
(-K_X\cdot f) & = & 1, \\
(-K_X\cdot g) & = & n-1
\end{eqnarray*}
hold, where $f$ is the strict transform of a line on $Y=\Q^n$ passing through $a$ 
and $g$ is a line in the exceptional divisor ($\simeq\pr^{n-1}$) of $X\rightarrow Y$. 
Thus $s(X)=n-2$. 
\item\label{calBCW3}
If $X=\Bl_a(Y)$ is in Theorem \ref{BCW} \eqref{BCW3}, then 
\begin{eqnarray*}
\NE(X) & = & {\R}_{\geq 0}[f]+{\R}_{\geq 0}[g]+{\R}_{\geq 0}[l]+{\R}_{\geq 0}[m],\\ 
l & \equiv & m+g+(1-d)f\text{ in }\NC(X),\\
(-K_X\cdot f) & = & 1,\quad (-K_X\cdot g)=1,\\
(-K_X\cdot l) & = & n+1-d,\quad (-K_X\cdot m)=1 
\end{eqnarray*}
hold, where 
$f \subset X$ is a fiber over $Z$,
$g \subset X$ is a line in a fiber over $a$,
$l \subset X$ is a line in $H'$, and 
$m \subset X$ is a strict transform of a line passing through $a$ and a point in $Z$.
Thus if $d=1$ then $s(X)=n-2$, but if $d>1$ then $s(X)=2n-2-d$. 
We note that if $d=2$, then $X$ is isomorphic to $\Bl_{p,q}({\Q}^{n})$ with 
$\overline{pq}\not\subset\Q^n(\subset\pr^{n+1})$ 
(see \cite[Corollaire 1.2]{divpt}) and $s(X)=2n-4$. 
\end{enumerate}
\end{remark}

\begin{thm}[{\cite{quasi, tsu1, tsu3}}]\label{tsucurve}
Let $Y$ be an $n$-dimensional smooth projective variety with $n\geq 4$, 
$C\subset Y$ be a smooth curve, $X:={\Bl}_C(Y)$, and $E$ be the exceptional 
divisor of the morphism $X\rightarrow Y$.
We assume that $X$ is a Fano manifold.
\begin{enumerate}
\renewcommand{\theenumi}{\arabic{enumi}}
\renewcommand{\labelenumi}{$(\theenumi)$}
\item\label{tsucurve1}
If ${\rho}_X=5$, then one of the following holds: 
\begin{enumerate}
\renewcommand{\theenumii}{\roman{enumii}}
\renewcommand{\labelenumii}{$\rm{(\theenumii)}$}
\item\label{tsucurve11}
$Y\simeq{\Bl}_{\{p\}\cup\{q\}\cup{\pr}^{n-2}}(\pr^n)$ with $\pr^{n-2}\cap\overline{pq}=\emptyset$ and 
$C$ is the strict transform of $\overline{pq}$. 
\item\label{tsucurve12}
$Y\simeq{\Bl}_{\{p\}\cup\{q\}\cup\Q^{n-2}}(\pr^n)$ with $\Q^{n-2}\cap\overline{pq}=\emptyset$ and 
$C$ is the strict transform of $\overline{pq}$.
\end{enumerate}

\item\label{tsucurve2}
Assume that there exists an extremal ray $R\subset\NE(X)$ of fiber type with 
$l(R)\geq 2$ and $(E\cdot R)>0$.
\begin{itemize}
\item\label{tsucurve21}
If $R$ is of type $(n, n-2)$, then ${\rho}_X=2$.
\item\label{tsucurve22}
If $R$ is of type $(n, n-1)$, then the pair of $(Y, C)$ is one of the following:
\begin{enumerate}
\renewcommand{\theenumii}{\roman{enumii}}
\renewcommand{\labelenumii}{$\rm{(\theenumii)}$}
\item\label{tsucurve221}
$Y\simeq\Q^n$ and $C$ is a line in $\Q^n\subset\pr^{n+1}$.
\item\label{tsucurve222}
$Y\simeq\pr^1\times\pr^{n-1}$ and $C$ is a fiber of the second projection.
\item\label{tsucurve223}
$Y\simeq{\Bl}_{\pr^{n-2}}(\pr^n)$ and $C$ is the strict transform of a line in $\pr^n$ 
disjoint from $\pr^{n-2}$.
\item\label{tsucurve224}
$Y\simeq{\Bl}_{\pr^{n-2}}(\pr^n)$ and $C$ is a fiber of the blowing up.  \item\label{tsucurve225}
$Y\simeq{\pr}_{\pr^1}({\sO}_{\pr^1}\oplus{{\sO}_{\pr^1}(1)}^{\oplus n-1})$ 
and $C$ is the section of $\pr^{n-1}$-bundle over $\pr^1$ whose normal bundle 
${\mathcal{N}}_{C/Y}$ is isomorphic to ${{\sO}_{\pr^1}(-1)}^{\oplus n-1}$.
\end{enumerate}
\end{itemize}
\item\label{tsucurve3}
Assume that there exists an extremal ray $R\subset\NE(X)$ of fiber type with 
$(E\cdot R)=0$. Let $\varphi\colon X\rightarrow Z$ be the contraction morphism 
associated to $R$. Then $R$ is of type $(n, n-1)$, $C\simeq\pr^1$, 
$E\simeq\pr^1\times\pr^{n-2}$, $E=\varphi^*D$ and 
$Z$ is factorial, where $D:=\varphi(E)$ with the 
reduced structure. Furthermore, if $n=4$, then there exists an extremal ray 
$R_Z\subset\NE(Z)$ with the associated contraction morphism 
$\varphi_Z\colon Z\rightarrow W$ such that $\varphi_Z$ maps $D$ to a point.
\end{enumerate}
\end{thm}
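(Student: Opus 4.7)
The plan is to treat the three parts separately, exploiting in each case the canonical divisorial ray $R_\pi \subset \NE(X)$ coming from the blow-up $\pi\colon X\to Y$: it is of type $(n-1,1)^{\sm}$, has $l(R_\pi)=n-2$, and its exceptional divisor $E \simeq \pr(\sN_{C/Y}^\vee)$ is a $\pr^{n-2}$-bundle over the smooth curve $C$.

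For part \eqref{tsucurve1}, from $\rho_X=5$ we have $\rho_Y=4$. I would first bound the number of divisorial $(n-1,1)$-rays on $X$ by Theorem \ref{casa}\eqref{casa2}, use Proposition \ref{tsusmooth} to show that those of maximal length $n-2$ are pairwise disjoint, and then glue them via Corollary \ref{glue_cor}\eqref{glue_cor2} in order to descend to a Fano manifold of strictly smaller Picard number. Combined with the classification of low Picard number Fano manifolds, this identifies $Y$ as $\pr^n$ blown up at two disjoint points and an $(n-2)$-dimensional linear or quadric subvariety, forcing $C$ to be the strict transform of the line through the two points, which gives the cases \eqref{tsucurve11} and \eqref{tsucurve12}.

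For part \eqref{tsucurve2}, the condition $(E\cdot R)>0$ implies that fibers of $\cont_R$ meet $E$. When $R$ is of type $(n,n-2)$, Proposition \ref{famineq} applied to a minimal family through a point of $E \cap \Exc(R)$ together with the rigidity provided by $R_\pi$ rules out any third extremal ray and gives $\rho_X=2$. When $R$ is of type $(n,n-1)$, $\cont_R$ is a $\pr^1$-fibration onto an $(n-1)$-dimensional base, and the composite rational map $Y \dashrightarrow \cont_R(X)$ is tightly controlled: Wi\'sniewski's inequality \ref{wisinieq} bounds the fibers of $\cont_R$, while a case analysis depending on whether $C$ dominates the base, maps to a point, or has intermediate image constrains $\sN_{C/Y}^\vee$ enough to match each possibility with exactly one of the five listed models \eqref{tsucurve221}--\eqref{tsucurve225}.

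For part \eqref{tsucurve3}, $(E\cdot R)=0$ forces every curve of class in $R$ that meets $E$ to lie in $E$. Since the fibers of $\cont_R$ represent a class independent of $R_\pi$, the restriction $\cont_R|_E \colon E \to D$ must collapse a direction transverse to the $\pr^{n-2}$-rulings of $E\to C$, forcing $C \simeq \pr^1$, $E \simeq \pr^1 \times \pr^{n-2}$, and, by comparing divisors using $(E\cdot R)=0$, $E = \varphi^*D$; $\Q$-factoriality of $Z$ then follows from Proposition \ref{contrprop}\eqref{contrprop2}. For the $n=4$ supplement, $D \simeq \pr^2 \subset Z$ and a short numerical computation of $-K_Z|_D$ produces a $K_Z$-negative extremal ray on $Z$ that contracts $D$ to a point by the cone theorem. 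I expect the bulk of the difficulty to lie in the $(n,n-1)$ sub-case of part \eqref{tsucurve2}, where classifying $(Y,C)$ requires matching the $\pr^1$-fibration structure of $\cont_R$ with the admissible normal bundles along a curve in a Fano $n$-fold.
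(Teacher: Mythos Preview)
The paper does not prove this theorem from scratch: parts \eqref{tsucurve1} and \eqref{tsucurve2} are cited directly from \cite[Theorem~1]{tsu1} and \cite[Propositions~3,~4]{tsu3}, and part \eqref{tsucurve3} quotes \cite[Lemmas~3.9(i), 3.10(i) and Theorem~4.1(ii)]{quasi} for everything except the observations that $C\simeq\pr^1$ (a one-dimensional fiber of $\varphi$ inside $E$ surjects onto $C$ via $\pi$) and $E\simeq\pr^1\times\pr^{n-2}$ (a $\pr^{n-2}$-bundle over $\pr^1$ that drops dimension under $\varphi$ must split). Your proposal attempts independent arguments, which is more ambitious than what the paper does.

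Within your sketch for \eqref{tsucurve3} there are two concrete gaps. First, Proposition~\ref{contrprop}\eqref{contrprop2} cannot give the factoriality of $Z$: that statement concerns gluing \emph{divisorial} contractions with disjoint exceptional loci, whereas $\varphi\colon X\to Z$ is of \emph{fiber} type, so its hypothesis is not met; factoriality of the target of such a contraction is a separate fact, and is exactly what \cite[Lemma~3.10(i)]{quasi} supplies. Second, for the $n=4$ supplement, $Z$ is only known to be factorial, not smooth, so before invoking the cone theorem you must check that $Z$ has klt singularities and that $-K_Z$ is suitably positive, and even then you must argue that some extremal ray actually sends $D$ to a \emph{point} rather than to a curve; the paper outsources precisely this step to \cite[Theorem~4.1(ii)]{quasi}. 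Your outlines for \eqref{tsucurve1} and \eqref{tsucurve2} are broadly in the right spirit but remain sketches: the appeal to a ``classification of low Picard number Fano manifolds'' in \eqref{tsucurve1} is essentially the content of \cite{tsu1} itself, and the $(n,n-1)$ case-analysis in \eqref{tsucurve2} is the main work of \cite{tsu3}.
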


\begin{proof}
\eqref{tsucurve1} and \eqref{tsucurve2} follow from 
\cite[Theorem 1]{tsu1} and \cite[Propositions 3, 4]{tsu3}. 
We prove \eqref{tsucurve3}. 
The ray $R$ is of type $(n, n-1)$, $E=\varphi^*D$ and $Z$ is factorial 
by the fact $\dim D\geq n-2$ 
and by \cite[Lemmas 3.9 (i), 3.10 (i)]{quasi}. 
Moreover, $C\simeq\pr^1$ since a one-dimensional fiber of 
$\varphi$ in $E$ maps $X\rightarrow Y$ onto $C$. 
We know that $E\simeq\pr^1\times\pr^{n-2}$ since $E\simeq\pr_C(\sN^\vee_{C/Y})$ 
and $\dim E>\dim D$, where $\sN_{C/Y}$ is the normal bundle of $C$ in $Y$. 
The cone $\NE(Z)$ is closed since $\NE(X)$ is so. 
Assume that $n=4$. Then the existence of the ray 
$R_Z\subset\NE(Z)$ follows from \cite[Theorem 4.1 (ii)]{quasi}. 
\end{proof}

\begin{remark}\label{caltsu}
We have the following properties by easy calculations. 
\begin{enumerate}
\renewcommand{\theenumi}{\arabic{enumi}}
\renewcommand{\labelenumi}{$(\theenumi)$}
\item\label{caltsu1}
\begin{enumerate}
\renewcommand{\theenumii}{\roman{enumii}}
\renewcommand{\labelenumii}{$\rm{(\theenumii)}$}
\item\label{caltsu11}
If $X=\Bl_C(Y)$ is in Theorem \ref{tsucurve} \eqref{tsucurve1} $(\rm{i})$, then 
\begin{eqnarray*}
\NE(X) & = & {\R}_{\geq 0}[e]+{\R}_{\geq 0}[f]+{\R}_{\geq 0}[g]+{\R}_{\geq 0}[h]\\
 & + & {\R}_{\geq 0}[k]+{\R}_{\geq 0}[l]+{\R}_{\geq 0}[m],\\
(-K_X\cdot e) & = & n-2,\quad (-K_X\cdot f)=1,\quad(-K_X\cdot g)=1, \\
(-K_X\cdot h) & = & 1,\,\, (-K_X\cdot k)=1,\,\, (-K_X\cdot l)=1,\,\, (-K_X\cdot m)=1,  
\end{eqnarray*}
and $\NE(X)$ is exactly spanned by the above seven rays, where 
\begin{itemize}
\item
$e$ is a nontrivial fiber of the morphism $X\rightarrow Y$, 
\item
$f$ is the strict transform of a line 
in the exceptional divisor over $p$, 
\item
$g$ is the strict transform of a line 
in the exceptional divisor over $q$, 
\item
$h$ is a fiber over $\pr^{n-2}$, 
\item
$k$ is a fiber of $E\simeq C\times\pr^{n-2}\rightarrow\pr^{n-2}$, 
where $E$ is the exceptional divisor of 
$X\rightarrow Y$, 
\item
$l$ is the strict transform of a line in $\pr^n$ passing through $p$ and $\pr^{n-2}$, 
\item
$m$ is the strict transform of a line in $\pr^n$ passing through $q$ and $\pr^{n-2}$. 
\end{itemize}
Thus $s(X)=n-3$. 
\item\label{caltsu12}
If $X=\Bl_C(Y)$ is in Theorem \ref{tsucurve} \eqref{tsucurve1} $(\rm{ii})$, then 
\begin{eqnarray*}
\NE(X) & = & {\R}_{\geq 0}[e]+{\R}_{\geq 0}[f]+{\R}_{\geq 0}[g]+{\R}_{\geq 0}[h]\\
 & + & {\R}_{\geq 0}[j]+{\R}_{\geq 0}[k]+{\R}_{\geq 0}[l]+{\R}_{\geq 0}[m],\\
(-K_X\cdot e) & = & n-2,\, (-K_X\cdot f)=1,\, (-K_X\cdot g)=1, (-K_X\cdot h)=1,\\
(-K_X\cdot j) & = & 1,\,\, (-K_X\cdot k)=1,\,\, (-K_X\cdot l)=1,\,\, (-K_X\cdot m)=1,  
\end{eqnarray*}
and $\NE(X)$ is exactly spanned by the above eight rays, where 
\begin{itemize}
\item
$e$ is a nontrivial fiber of the morphism $X\rightarrow Y$, 
\item
$f$ is the strict transform of a line 
in the exceptional divisor over $p$, 
\item
$g$ is the strict transform of a line 
in the exceptional divisor over $q$, 
\item
$h$ is a fiber over $\Q^{n-2}$, 
\item
$j$ is the strict transform of a line in $\pr^n$ intersects $\overline{pq}$ with each other 
and is contained in a unique hyperplane in $\pr^n$ which contains $\Q^{n-2}$, 
\item
$k$ is a fiber of $E\simeq C\times\pr^{n-2}\rightarrow\pr^{n-2}$, 
where $E$ is the exceptional divisor of $X\rightarrow Y$, 
\item
$l$ is the strict transform of a line in $\pr^n$ passing through $p$ and $\Q^{n-2}$, 
\item
$m$ is the strict transform of a line in $\pr^n$ passing through $q$ and $\Q^{n-2}$. 
\end{itemize}
Thus $s(X)=n-3$. 

\end{enumerate}
\item\label{caltsu2}
\begin{enumerate}
\renewcommand{\theenumii}{\roman{enumii}}
\renewcommand{\labelenumii}{$\rm{(\theenumii)}$}
\item\label{caltsu221}
If $X=\Bl_C(Y)$ is in Theorem \ref{tsucurve} \eqref{tsucurve2} $(\rm{i})$, then 
$\rho_X=2$. Thus $s(X)<n$ by Corollary \ref{rhotwo}. 
\item\label{caltsu222}
If $X=\Bl_C(Y)$ is in Theorem \ref{tsucurve} \eqref{tsucurve2} $(\rm{ii})$, then 
\begin{eqnarray*}
\NE(X) & = & {\R}_{\geq 0}[f]+{\R}_{\geq 0}[g]+{\R}_{\geq 0}[h],\\
(-K_X\cdot f) & = & n-2,\quad (-K_X\cdot g)=2,\quad(-K_X\cdot h)=2
\end{eqnarray*}
hold, where $f$ is a nontrivial fiber of $X\rightarrow Y$, $g$ is the strict transform 
of a general fiber of the first projection $Y=\pr^1\times\pr^{n-1}\rightarrow\pr^{n-1}$ 
and $h$ is the strict transform of a line in the second projection 
$Y=\pr^1\times\pr^{n-1}\rightarrow\pr^1$ passing through $C$. Thus $s(X)=n-1$. 
\item\label{caltsu223}
If $X=\Bl_C(Y)$ is in Theorem \ref{tsucurve} \eqref{tsucurve2} $(\rm{iii})$, then 
\begin{eqnarray*}
\NE(X) & = & {\R}_{\geq 0}[f]+{\R}_{\geq 0}[g]+{\R}_{\geq 0}[h],\\
(-K_X\cdot f) & = & n-2,\quad (-K_X\cdot g)=1,\quad(-K_X\cdot h)=2
\end{eqnarray*}
hold, where $f$ is a nontrivial fiber of $X\rightarrow Y$, $g$ is a fiber over $\pr^{n-2}$
and $h$ is the strict transform of a line in $\pr^n$ passing through $C$ and $\pr^{n-2}$. 
Thus $s(X)=n-2$. 
\item\label{caltsu224}
If $X=\Bl_C(Y)$ is in Theorem \ref{tsucurve} \eqref{tsucurve2} $(\rm{iv})$, then 
\begin{eqnarray*}
\NE(X) & = & {\R}_{\geq 0}[f]+{\R}_{\geq 0}[g]+{\R}_{\geq 0}[h],\\
(-K_X\cdot f) & = & n-2,\quad (-K_X\cdot g)=1,\quad(-K_X\cdot h)=2
\end{eqnarray*}
hold, where $f$ is a nontrivial fiber of $X\rightarrow Y$, $g$ is a general fiber over $\pr^{n-2}$
and $h$ is the strict transform of a line in $\pr^n$ passing through $\pr^{n-2}$ 
and the image of $C$ in $\pr^n$. Thus $s(X)=n-2$. 
\item\label{caltsu225}
If $X=\Bl_C(Y)$ is in Theorem \ref{tsucurve} \eqref{tsucurve2} $(\rm{v})$, then 
\begin{eqnarray*}
\NE(X) & = & {\R}_{\geq 0}[f]+{\R}_{\geq 0}[g]+{\R}_{\geq 0}[h],\\
(-K_X\cdot f) & = & n-2,\quad (-K_X\cdot g)=1,\quad(-K_X\cdot h)=2
\end{eqnarray*}
hold, where $f$ is a nontrivial fiber of $X\rightarrow Y$, $g$ is a fiber of 
$E\simeq C\times\pr^{n-2}\rightarrow\pr^{n-2}$, 
where $E$ is the exceptional divisor of $X\rightarrow Y$, 
and $h$ is the strict transform of a line in a fiber of $Y\rightarrow\pr^1$ 
passing through $C$. Thus $s(X)=n-2$. 
\end{enumerate}
\end{enumerate}
\end{remark}


\section{Proof of Theorem \ref{mainthm}}

In this section, we prove Theorem \ref{mainthm}. 
If an $n$-dimensional Fano manifold $X$ satisfies that 
$s(X)\geq n$ and ${\rho}_X=1$, then $s(X)=n$ and $X\simeq\pr^n$ by \cite{CMSB}. 
Hence we can consider only the Fano manifolds $X$ with $\rho_X\geq 2$.

\subsection{Proof of Theorem \ref{mainthm} \eqref{mainthm1}}

We can assume that $n=3$ since the case $n\leq 2$ is trivial. 
We prove the assertion without using 
the result \cite{MoMu} of complete classification of $3$-dimensional 
Fano manifolds $X$ with ${\rho}_X\geq 2$.
Let $X$ be a $3$-dimensional Fano manifold with $s(X)\geq 3$.
We can assume that ${\rho}_X\geq 3$ by Corollary \ref{rhotwo}.
By Theorem \ref{wisinieq}, Proposition \ref{contrprop} \eqref{contrprop1} and Theorem \ref{AnOc},
any extremal ray $R\subset\NE(X)$ with $l(R)\geq 2$ 
satisfies one of the following:
\begin{enumerate}
\renewcommand{\theenumi}{\Alph{enumi}}
\renewcommand{\labelenumi}{\rm{(\theenumi)}}
\item\label{1A}
$R$ is of type ${(2, 0)}^{\sm}$ and $l(R)=2$.
\item\label{1B}
$R$ is of type $(3, 2)$ and $l(R)=2$.
\end{enumerate}
(We note that this result directly follows from 
\cite[Theorems 3.3, 3.5]{Mor}.)
If there exists an extremal ray $R\subset\NE(X)$ of type \eqref{1A}, then 
$X\simeq{\Bl}_a(V_d)$ with $1\leq d\leq 3$ by Theorem \ref{BCW}, thus $s(X)<3$ by Remark \ref{calBCW} \eqref{calBCW3}.
If there exist distinct extremal rays $R_1$, $R_2$ and $R_3\subset\NE(X)$ such that 
all of them are of type \eqref{1B}, then 
$X\simeq\pr^1\times\pr^1\times\pr^1$ by Theorem \ref{prlngth}. 
Therefore we have completed the proof of Theorem \ref{mainthm} \eqref{mainthm1}. 

\subsection{Proof of Theorem \ref{mainthm} \eqref{mainthm2}}

Let $X$ be a $4$-dimensional Fano manifold with $s(X)\geq 4$. 
We can assume that $\rho_X\geq 3$ by Corollary \ref{rhotwo}. 
(We note that if ${\rho}_X=2$ and both extremal rays are small,
then $s(X)=0$.)
By Theorem \ref{wisinieq}, Proposition \ref{contrprop} \eqref{contrprop1} 
and Theorem \ref{AnOc}, any extremal ray $R\subset\NE(X)$ with $l(R)\geq 2$ 
satisfies one of the following: 

\begin{enumerate}
\renewcommand{\theenumi}{\Alph{enumi}}
\renewcommand{\labelenumi}{\rm{(\theenumi)}}
\item\label{2A}
$R$ is of type ${(3, 0)}^{\sm}$ and $l(R)=3$.
\item\label{2B}
$R$ is of type $(3, 0)$ and $l(R)=2$.
\item\label{2C}
$R$ is of type ${(3, 1)}^{\sm}$ and $l(R)=2$.
\item\label{2D}
$R$ is of type $(4, 3)$ and $l(R)=2$.
\item\label{2E}
$R$ is of type $(4, 2)$ and $l(R)=3$.
\item\label{2F}
$R$ is of type $(4, 2)$ and $l(R)=2$.
\end{enumerate}
We note that all two distinct divisorial extremal rays $R_1$, $R_2$ with 
$l(R_1)$, $l(R_2)\geq 2$ satisfy that $\Exc(R_1)\cap\Exc(R_2)=\emptyset$ 
by Propositions \ref{divptprop} and \ref{tsusmooth}. 

Assume that there exists an extremal ray $R$ of type \eqref{2A}. Then 
$X\simeq{\Bl}_a(V_2)\simeq{\Bl}_{p,q}(\Q^4)$ and $s(X)=4$
by Theorem \ref{BCW} and Remark \ref{calBCW} \eqref{calBCW3}. 
Assume that there exists an extremal ray $R$ of type \eqref{2B} and 
there is no extremal ray of type \eqref{2A}. Then $\rho_X=3$ and 
any other extremal ray $R'$ with 
$l(R')\geq 2$ is of type \eqref{2B} or \eqref{2C} by Proposition \ref{divptprop} 
and Theorem \ref{casa} \eqref{casa1}. Since $s(X)\geq 4$, there exist distinct extremal 
rays $R_1$, $R_2$, $R_3$ apart from $R$ such that each of them is of type 
\eqref{2B} or \eqref{2C}. This contradicts to Corollary \ref{glue_cor} \eqref{glue_cor1}. 
Hence we can assume that any extremal ray $R$ with $l(R)\geq 2$ is of type 
\eqref{2C}, \eqref{2D}, \eqref{2E} or \eqref{2F}. 

Assume that there exists an extremal ray $R_1$ of type \eqref{2C}. 
We have $\rho_X\leq 4$ by Theorems \ref{casa} \eqref{casa2},
\ref{tsucurve} \eqref{tsucurve1} 
and Remark \ref{caltsu} \eqref{caltsu1}. 
By Corollary \ref{glue_cor} \eqref{glue_cor1}, the number of extremal rays of type 
\eqref{2C} is at most three. Since $s(X)\geq 4$, 
there exists an extremal ray $R_0$ of fiber type and $l(R_0)\geq 2$. 
Then $(\Exc(R_1)\cdot R_0)=0$ 
and $R_0$ is of type \eqref{2D} 
by Theorem \ref{tsucurve} \eqref{tsucurve2}, \eqref{tsucurve3} and Remark 
\ref{caltsu} \eqref{caltsu2}. Moreover, any extremal ray $R'$ of fiber type 
apart from $R_0$ satisfies that $(\Exc(R_1)\cdot R')>0$. 
Indeed, by Theorem \ref{tsucurve} \eqref{tsucurve3}, if $(\Exc(R_1)\cdot R')=0$ then 
$R'$ contains the class of a fiber of the morphism 
$\Exc(R_1)\simeq\pr^1\times\pr^2\rightarrow\pr^2$. This implies that $R'=R_0$, 
which leads to a contradiction. 
Thus $l(R')=1$ by Theorem \ref{tsucurve} \eqref{tsucurve2} 
and Remark \ref{caltsu} \eqref{caltsu2}. 
Since $s(X)\geq 4$, there exist distinct extremal rays $R_2$, $R_3$ 
apart from $R_1$ such that 
$R_2$, $R_3$ are of type \eqref{2C}. We note that $\rho_X=4$ by 
Corollary \ref{glue_cor}. 
Let $\varphi\colon X\rightarrow Y$ 
be the contraction morphism associated to $R_0$ and 
set $D_i:=\varphi(\Exc(R_i))$ for $1\leq i\leq 3$. Since $\Exc(R_i)=\varphi^*D_i$, 
$D_i\cap D_j=\emptyset$ for $1\leq i<j\leq 3$. 
By Theorem \ref{tsucurve} \eqref{tsucurve3}, for any $1\leq i\leq 3$, 
there exists a contraction morphism 
$\psi_i\colon Y\rightarrow Z_i$ associated to an extremal ray 
$R_Z^i\subset\NE(Y)$ such that $\psi_i(D_i)$ is a point. 
Since $\rho_Y=3$, each ray $R_Z^i$ is divisorial by 
Proposition \ref{contrprop} \eqref{contrprop1}. 
However, this contradicts to 
Corollary \ref{glue_cor} \eqref{glue_cor1}. 

Therefore, we can assume that any extremal ray $R$ with $l(R)\geq 2$ is of fiber type. 
Since $s(X)\geq 4$, there exist distinct extremal rays $R_1,\dots,R_m$ of fiber type 
such that $\sum_{i=1}^m(l(R_i)-1)\geq 4$. By Theorem \ref{prlngth}, 
$\sum_{i=1}^m(l(R_i)-1)=4$ and $X\simeq\prod_{i=1}^m\pr^{l(R_i)-1}$.

As a consequence, we have completed the proof of 
Theorem \ref{mainthm} \eqref{mainthm2}.

\smallskip

\noindent K.\ Fujita

Research Institute for Mathematical Sciences (RIMS),
Kyoto University, 

Oiwake-cho, Kitashirakawa, Sakyo-ku, Kyoto 606-8502, Japan 

fujita@kurims.kyoto-u.ac.jp


\begin{thebibliography}{99}

\bibitem[ACO04]{ACO}
M.\ Andreatta, E.\ Chierici, and G.\ Occhetta, \emph{Generalized Mukai
  conjecture for special Fano varieties}, Cent.\ Eur.\ J.\ Math.\ \textbf{2} (2004), 
no.\ 2, 272--293.

\bibitem[AO02]{AO}
M.\ Andreatta and G.\ Occhetta, \emph{Special rays in the Mori cone of a projective variety}, 
 Nagoya Math.\ J.\  \textbf{168}  (2002), 127--137.

\bibitem[Ara06]{araujo}
C.\ Araujo, \emph{Rational curves of minimal degree and characterizations of projective spaces},
 Math.\ Ann.\ \textbf{335} (2006), no.\ 4, 937--951.

\bibitem[BCDD03]{gm}
L.\ Bonavero, C.\ Casagrande, O.\ Debarre and S.\ Druel, \emph{Sur 
une conjecture de Mukai}. Comment.\ Math.\ Helv.\ \textbf{78} (2003), no.\ 3, 616--626. 

\bibitem[BCW02]{divpt}
L.\ Bonavero, F.\ Campana, and J.\ A.\ Wi{\'s}niewski, \emph{Vari{\'e}t{\'e}s
  projectives complexes dont l'{\'e}clat{\'e}e en un point est de Fano},
  C.\ R.\ Math.\ Acad.\ Sci.\ Paris \textbf{334} (2002), no.\ 6, 463--468.

\bibitem[Cas06]{toric}
C.\ Casagrande, \emph{The number of vertices of a Fano polytope}, 
Ann.\ Inst.\ Fourier (Grenoble) 
\textbf{56} (2006), no.\ 1, 121--130.

\bibitem[Cas08]{quasi}
C.\ Casagrande, \emph{Quasi-elementary contractions of Fano manifolds}, 
Compos.\ Math.\ 
\textbf{144} (2008), no.\ 6, 1429--1460.

\bibitem[Cas09]{divcurve}
C.\ Casagrande, \emph{On Fano manifolds with a birational contraction sending a
  divisor to a curve}, Michigan Math.\ J.\ \textbf{58} (2009), no.\ 3, 783--805.

\bibitem[CMSB02]{CMSB}
K.\ Cho, Y.\ Miyaoka and N.\ I.\ Shepherd-Barron, \emph{Characterizations of 
projective spaces and applications to complex symplectic manifolds}, 
Higher dimensional birational geometry (Kyoto, 1997), 1--88, 
Adv.\ Stud.\ Pure Math.\ \textbf{35}, Math.\ Soc.\ Japan, Tokyo, 2002. 

\bibitem[Har77]{har}
R.\ Hartshorne, \emph{Algebraic geometry}, 
Graduate Texts in Mathematics, No.\ 52.\ Springer-Verlag, New York-Heigelberg, 1977. 

\bibitem[Keb02]{kebekus}
S.\ Kebekus, \emph{Families of singular rational curves}, 
J.\ Algebraic Geom.\ \textbf{11} (2002), no.\ 2, 245--256 

\bibitem[KM98]{KoMo}
J.\ Koll{\'a}r and S.\ Mori, \emph{Birational geometry of algebraic varieties},
Cambridge Tracts in Math, vol.\ 134,
Cambridge University Press, Cambridge, 1998.

\bibitem[Kol96]{kollar}
J.\ Koll{\'a}r, \emph{Rational curves on algebraic varieties}, Ergebnisse der
  Mathematik und ihrer Grenzgebiete, vol.\ 32, Springer-Verlag, 1996.

\bibitem[MM81]{MoMu}
S.\ Mori and S.\ Mukai, \emph{Classification of Fano {$3$}-folds with 
$B_2\geq 2$}, 
Manuscripta Math.\ \textbf{36} (1981), no.\ 2, 147--162. 
Erratum: {\bf 110} (2003), no.\ 3, 407.

\bibitem[Mor82]{Mor}
S.\ Mori, \emph{Threefolds whose canonical bundles are not numerically effective},
Ann.\ of Math.\ (2)  \textbf{116}  (1982), no.\ 1, 133--176.

\bibitem[Muk88]{mukai}
S.\ Mukai, \emph{Problems on characterization of the complex projective space},
  Birational Geometry of Algebraic Varieties, Open Problems, Proceedings of the
  23rd Symposium of the Taniguchi Foundation at Katata, Japan, 1988,
  pp.57--60.

\bibitem[NO10]{NoOc}
C.\ Novelli and G.\ Occhetta, \emph{Rational curves and bounds on the Picard
  number of Fano manifolds}, Geom.\ Dedicata \textbf{147} (2010), 207--217.

\bibitem[Occ06]{Occ}
G.\ Occhetta, \emph{A characterization of products of projective spaces}, 
Canad.\ Math.\ Bull.\ \textbf{49} (2006), no.\ 2, 270--280. 

\bibitem[Tsu10a]{tsu1}
T.\ Tsukioka, \emph{Fano manifolds obtained by blowing up along curves with maximal 
Picard number}, Manuscripta Math.\ \textbf{132} (2010), no.\ 1--2, 247--255.

\bibitem[Tsu10b]{tsu2}
T.\ Tsukioka, \emph{A remark on Fano $4$-folds having $(3,1)$-type extremal contractions},
Math.\ Ann.\ \textbf{348} (2010), no.\ 3, 737--747.

\bibitem[Tsu10c]{tsuproc}
T.\ Tsukioka, \emph{Pseudo-index and the length of extremal rays of Fano manifolds} 
(in Japanese), 
WebProceedings of Mini-Conference on Algebraic Geometry in Saitama University, 
March 1--2, 2010. See
\texttt{http://www.rimath.saitama-u.ac.jp/lab.jp/fsakai/proc2010e.html/}

\bibitem[Tsu12]{tsu3}
T.\ Tsukioka, \emph{On the minimal length of extremal rays for Fano four-folds}, 
Math.\ Z.\ \textbf{271} (2012), no.\ 1--2, 555-564. 

\bibitem[Wi{\'s}90]{wisn2}
J.\ A.\ Wi{\'s}niewski, \emph{On a conjecture of Mukai}, Manuscripta Math.\
  \textbf{68} (1990), no.\ 2, 135--141.

\bibitem[Wi{\'s}91]{wisn}
J.\ A.\ Wi{\'s}niewski, \emph{On contractions of extremal rays of Fano manifolds}, J.\ Reine Angew.\ Math.\ 
\textbf{417} (1991), 141--157.

\end{thebibliography}
\end{document}